\theoremstyle{plain}
\newtheorem{thm}{Theorem}
\newtheorem{lem}[thm]{Lemma}
\newtheorem{prop}[thm]{Proposition}
\newtheorem{cor}[thm]{Corollary}
\theoremstyle{definition}
\theoremstyle{remark}
\newtheorem{rmk}[thm]{Remark}
\DeclareMathOperator{\Tr}{Tr}
\DeclareMathOperator{\im}{im}
\DeclareMathOperator{\SL}{SL}
\DeclareMathOperator{\PSL}{PSL}
\newcommand*{\df}{\mathrel{\vcenter{\baselineskip0.5ex \lineskiplimit0pt
                     \hbox{\scriptsize.}\hbox{\scriptsize.}}} =}
\providecommand{\abs}[1]{\left\lvert#1\right\rvert}
\providecommand{\twomat}[4]{\left(\begin{matrix}#1&#2\\#3&#4\end{matrix}\right)}
\providecommand{\stwomat}[4]{\left(\begin{smallmatrix}#1&#2\\#3&#4\end{smallmatrix}\right)}
\providecommand{\lseries}[2]{#1(\!( #2 )\!)}
\newcommand{\QQ}{\mathbf{Q}}
\newcommand{\FF}{\mathbf{F}}
\newcommand{\Qbar}{\overline{\mathbf{Q}}}
\newcommand{\ZZ}{\mathbf{Z}}
\newcommand{\RR}{\mathbf{R}}
\DeclareMathOperator{\Stab}{Stab}
\begin{document}
\title[Noncongruence subgroups of index $7$]{The unbounded denominator conjecture for the noncongruence subgroups of index $7$}
\author{Andrew Fiori}
\address{University of Lethbridge}
\email{andrew.fiori@uleth.ca}
\author{Cameron Franc}
\address{McMaster University}
\email{franc@math.mcmaster.ca}

\thanks{The authors gratefully acknowledge financial support received from NSERC through their respective Discovery Grants, the financial support of the University of Lethbridge, and the use of computational resources made available through WestGrid and Compute Canada.}
\date{}

\begin{abstract}
We study modular forms for the minimal index noncongruence subgroups of the modular group. Our main theorem is a proof of the unbounded denominator conjecture for these groups, and we also provide a study of the Fourier coefficients of Eisenstein series for one of these minimal groups.
\end{abstract}
\maketitle

\setcounter{tocdepth}{1}
\tableofcontents

\section{Introduction}
\label{s:intro}
Finite index subgroups of the modular group $\Gamma = \PSL_2(\ZZ)$ play an important role in the study of algebraic curves thanks to a Theorem of Belyi \cite{Belyi}, which implies that curves of genus at least two defined over $\Qbar$ can be uniformized by such groups. The congruence subgroups correspond to the well-studied and fundamentally important modular curves, whereas the vast majority of curves correspond to finite index subgroups of $\Gamma$ that are \emph{not} defined by congruence conditions. Such noncongruence subgroups and their corresponding modular forms are much less well-understood than congruence groups and forms.

To date much of the work on noncongruence modular forms has focused on the following topics: Galois representations and congruences with congruence modular forms \cite{ASD}, \cite{Scholl3}, \cite{Scholl4}, \cite{Scholl5}, \cite{Scholl6}, \cite{Li1}, \cite{Li2}, \cite{HLV}, \cite{Long1}, \cite{Long2}, \cite{Long3}; the unbounded denominator conjecture \cite{ASD}, \cite{Scholl3}, \cite{Li3}, \cite{Long4}, \cite{Long5}, \cite{FrancMason1}, \cite{FrancMason2}, \cite{Gottesman}; moduli interpretations \cite{Chen}; spectral results \cite{PhilipsSarnak}, \cite{PhilipsSarnak2}, \cite{PhilipsSarnak3} ; algebraic properties of Eisenstein series \cite{Kubota}, \cite{Scholl1}, \cite{MurtyRamakrishnan}; computation of scattering matrices \cite{Kuhn}, \cite{Posingies}, \cite{BKP}. Given the vast generality inherent in the study of noncongruence modular forms, most papers in the subject have made progress by restricting to classes of groups that are more amenable to study than a general subgroup --- for example, one could consider the kernel of a character of a congruence subgroup, so that some finite power of each noncongruence form is a congruence modular form.

In the present paper our aim is to study some noncongruence subgroups that have not yet received particular focus and, from this perspective, it is natural to focus on subgroups of small index in $\Gamma$. The minimal index of a noncongruence subgroup of $\Gamma$ is known to be seven, and there are twenty-eight such subgroups of index seven that fall into four conjugacy classes. Our main theorem is a proof of the unbounded denominator conjecture for these groups:
\begin{thm}
  \label{t:main}
  Let $G$ be any of the noncongruence subgroups of $\Gamma$ of index seven, and let $f \in M_k(G) \setminus M_k(\Gamma)$ have algebraic Fourier coefficients at the cusp $\infty$. Then $f$ has unbounded denominators.
\end{thm}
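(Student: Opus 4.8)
The plan is to recast this scalar statement for $G$ as a question about vector-valued modular forms (VVMFs) for the full group $\Gamma$, so that the finiteness of the index becomes the statement that the relevant representation has \emph{finite image}, and then to read the denominators off from the explicit hypergeometric solutions of the associated modular linear differential equation. First I would induce: writing $\rho = \Ind_G^\Gamma \mathbf 1$ for the permutation representation of $\Gamma$ on the seven cosets $\Gamma/G$ and fixing representatives $\gamma_1 = 1, \ldots, \gamma_7$, the tuple $F = (f|_k\gamma_1, \ldots, f|_k\gamma_7)$ is a VVMF of weight $k$ for $(\Gamma, \rho)$, and $\rho(\Gamma) \subseteq S_7$ is a transitive group whose possible isomorphism types are pinned down by the genus-zero signature (it must contain an element of order two and one of order three of the correct cycle types, which already excludes $C_7$, $D_7$, and $F_{21}$). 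The decisive elementary fact is that $G$ is congruence if and only if $\ker\rho$ is, i.e.\ if and only if $\rho$ factors through some $\Gamma/\Gamma(N)$; thus the hypothesis that $G$ is noncongruence says exactly that $\rho$ is a \emph{noncongruence representation}. Decomposing $\rho = \mathbf 1 \oplus \rho_0$, the hypothesis $f \notin M_k(\Gamma)$ forces the projection of $F$ onto the augmentation summand $\rho_0$ to be nonzero (otherwise all $f|_k\gamma_i$ agree and are $\Gamma$-invariant), so it suffices to exhibit unbounded denominators in this projection.

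Next I would exploit finiteness of the image to make $F$ explicit. Since $\rho(T)$ has finite order its eigenvalues are roots of unity, the exponents of $F$ at $\infty$ are rational, and—because $X_G \to X(1)$ is a finite cover branched only over $j \in \{0,1728,\infty\}$—each component of $F$ is algebraic over $\Qbar(j)$ and solves a modular linear differential equation with regular singular points whose local exponents are dictated by the elliptic-point and cusp data of the signature. On the low-dimensional irreducible constituents of $\rho_0$ this equation is rigid, so its solutions are generalized hypergeometric series ${}_nF_{n-1}$ in a Hauptmodul, with rational parameters $a_i,b_j$ read off from those exponents.

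I would then invoke Landau's criterion: such a hypergeometric series has bounded denominators if and only if the interlacing condition on the fractional parts $\{r a_i\}, \{r b_j\}$ holds for every $r$ prime to the common denominator. The goal is to show that for each of the four conjugacy classes this condition \emph{fails}, so that the hypergeometric coefficients, and hence $f$, have unbounded denominators; concretely one would produce a single residue $r$ and a single prime witnessing the failure.

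The main obstacle is that purely local exponent data does not by itself separate the noncongruence groups from a congruence cover of the same cusp widths: the noncongruence surjection $\Gamma \to \rho(\Gamma)$ is a \emph{twist} of the surjection underlying a congruence quotient (the two can realize the same cycle types for $S$, $ST$, and $T$, hence the same signature), and this twist is invisible to the local monodromy at any single singular point. The heart of the argument is therefore to locate the twist in global data—either by passing to an irreducible constituent on which the hypergeometric parameters themselves, and not merely the exponents, detect noncongruence, or, absent a uniform statement, by determining explicitly for each of the four classes the Hauptmodul, the degree-seven map $j = j(t)$, and the resulting $q$-expansion, and then giving a direct $p$-adic estimate at a well-chosen prime. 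Two routine points remain: that bounded denominators of the single component $f$ at $\infty$ propagate to the whole vector $F$ through the recursion of the differential equation, whose coefficients lie in a fixed number field, and that twisting by the scalar forms $E_4$, $E_6$, $\Delta$ to normalize the weight leaves the denominator growth unchanged, reducing us to a fixed small weight in each case.
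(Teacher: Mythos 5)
Your proposal's decisive step fails on a checkable point: the six-dimensional local system is not rigid, and in particular is not hypergeometric, so the Landau interlacing criterion never becomes available. Concretely, $\rho$ decomposes as $\mathbf{1}\oplus\rho_0$ with $\rho_0$ the irreducible six-dimensional constituent, and the local monodromies of $\rho_0$ on $\PP^1\setminus\{0,1728,\infty\}$ have eigenvalues $(1,1,\zeta_3,\zeta_3,\zeta_3^2,\zeta_3^2)$ at $j=0$ (since $R$ maps to a product of two $3$-cycles), $(-1,-1,-1,1,1,1)$ at $j=1728$ (since $S$ maps to a product of three $2$-cycles), and six distinct roots of unity at $j=\infty$ (from the $(4,3)$, $(5,2)$ or $(6,1)$ cycle types of $T$). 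The centralizer dimensions are $12+18+6=36$, whereas irreducible rigidity in rank $6$ with three singular points requires the sum to be $(3-2)\cdot 6^2+2=38$; so there is one accessory parameter, and moreover none of the local monodromies is a pseudo-reflection, which is necessary for a ${}_6F_5$ hypergeometric realization. This non-rigidity is not a technicality: it is exactly why the cover is not determined by its exponent data and why the paper must solve a genuine diophantine problem (the Atkin--Swinnerton-Dyer $j$-equations, via Gr\"obner bases, with spurious components appearing in the $H_1$ case) to pin down the hauptmodul. Your own caveat about the "twist" being invisible to local monodromy correctly diagnoses the obstruction, but your primary route through rigidity cannot overcome it.

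Your fallback clause --- compute the hauptmodul and the degree-seven map $j=j(t)$ explicitly and give a direct $p$-adic estimate --- is in fact the paper's proof, but your proposal supplies none of its content, and the one concrete bridge you do offer is invalid. The paper (i) reduces by $T$-conjugation and the outer automorphism $\psi(\stwomat abcd)=\stwomat a{-b}{-c}d$ to eight groups; (ii) solves for each hauptmodul $z$; (iii) proves unbounded denominators at $7$ (for $U_1$, $U_6$ at one of the two primes over $7$ in $\QQ(\zeta_3)$) by rescaling $z=u\hat z$ with $u$ an explicit radical of negative valuation at that prime, using Hensel's lemma to show $\hat z$ has integral coefficients, and observing that $\hat z \bmod 7$ satisfies $x^7+\hat{j}x^3+2$ over $\FF_7(\hat j)$, which no Laurent polynomial can satisfy, so infinitely many coefficients are units; and (iv) --- the step your proposal is missing entirely --- transfers unboundedness from $z$ to an arbitrary $f$: since $[\Gamma:G]=7$ is \emph{prime}, any $P(z)\in\Qbar(z)\setminus\Qbar(j)$ generates the full extension $\Qbar(z)/\Qbar(j)$, so $z\,R_1(j)=P_2(P(z))$ with $R_1, P_2$ having coefficients in $\Qbar[j]$, whence $P(z)$ inherits unbounded denominators; one then writes $f=E_kP(z)$ (or $(E_6/E_4)P(z)$ in weight $2$). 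Your substitute for (iv) --- that bounded denominators of the single component $f$ "propagate to the whole vector $F$ through the recursion" --- does not work and points the wrong way: the recursion determines the coefficients of all components jointly, and boundedness of one component gives no control over the conjugate components $f|_k\gamma_i$, whose expansions at $\infty$ involve the other cusps of $G$. The prime-index argument, which the paper flags as the reason its method resists generalization, has no replacement in your outline.
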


Our proof of Theorem \ref{t:main} proceeds as follows:
\begin{enumerate}
\item[(a)] solve for a hauptmodul (or Belyi map);
\item[(b)] establish unbounded denominators at the prime $p=7$ for this hauptmodul\footnote{In two cases, called $U_1$ and $U_6$ below, we must use a prime over $7$ in a quadratic extension of $\QQ$.};
\item[(c)] use this result to prove Theorem \ref{t:main} in general.
\end{enumerate}
This argument can be adapted to many other groups, but it does not seem suited to generalization for at least two independent reasons: first, the diophantine problem involved in solving for a hauptmodul can be somewhat tricky in general and, second, it is not clear how the proof of unbounded denominators would generalize (in this paper we are aided by the fact that the index $[\Gamma \colon G] = 7$ is prime).

After we complete the proof of Theorem \ref{t:main} we turn to the study Eisenstein series. As in \cite{Scholl1}, \cite{MurtyRamakrishnan} we are able to establish the algebraicity of the Eisenstein series of weight two for more or less trivial reasons, but in higher weights we are only able to determine the complex phase of the Fourier coefficients. Knowledge of this phase follows from the study of the outer automorphism
\[
  \twomat{a}{b}{c}{d}\mapsto \twomat{a}{-b}{-c}{d}
\]
of $\Gamma$ acting on the subgroups of index seven. We end the paper with some numerical computations that indicate that there is more that one might be able to say about these Fourier coefficients in general, although we make no precise conjectures along these lines.

\subsection{Acknowledgments} The authors thank Jenna Rajchgot for some help in using \emph{Macaulay 2} to solve the $j$-equations in Sections \ref{s:details} and \ref{s:other}. We also thank \emph{WestGrid} and \emph{Compute Canada} for some computational support.

\subsection{Notation} Throughout the paper we use the following notation:
\begin{itemize}
\item[---] $\Gamma = \PSL_2(\ZZ)$, the modular group;
\item[---] $T = \stwomat 1101$, $S = \stwomat{0}{-1}{1}{0}$ and $R = ST$;
\item[---] if $G\subseteq \Gamma$ is of finite index, then $M(G) = \bigoplus_{k\geq 0} M_k(G)$ denotes the graded module of modular forms for $G$;
\item[---] $\zeta_n = e^{2\pi i/ n}$ and $q_n = e^{2\pi i\tau/n}$;
\item[---] $E_k \in M_k(\Gamma)$ denotes the Eisenstein series of weight $k$ with constant term normalized to equal $1$;
\item[---] $E_2$ denotes the normalized quasi-modular Eisenstein series of weight $2$;
\item[---] $j$ is the usual $j$-function, with constant term $744$.
\end{itemize}
\section{Noncongruence subgroups of index $7$}

\begin{figure}
\begin{tabular}{|c|c|}
  \hline
 \begin{tikzpicture}
\tikzset{decoration={snake,amplitude=.4mm,segment length=2mm,post length=0mm,pre length=0mm}}
\tikzset{vertex/.style = {shape=circle,draw,scale=.5}}
\tikzset{edge/.style = {->,> = latex'}}

  \node[vertex] (1) at (4,0) {};
  \node[vertex] (2) at (2.5,0.866) {};
  \node[vertex] (3) at (2.5,-0.866) {};
  \node[vertex] (4) at (-1,0) {};
  \node[vertex] (5) at (0.5,0.866) {};
  \node[vertex] (6) at (0.5,-0.866) {};
  \node[vertex] (7) at (-2,0) {};
  
  \draw[edge] (1) to [bend right=50] (2);
  \draw[edge] (2) to [bend right=50] (3);
  \draw[edge] (3) to [bend right=50] (1);   

  \draw[edge] (4) to [bend left=50] (5);
  \draw[edge] (5) to [bend left=50] (6);
  \draw[edge] (6) to [bend left=50] (4);
  
  \draw[decorate] (2) -- (5);
  \draw[decorate] (3) -- (6);
  \draw[decorate] (4) -- (7);
  \end{tikzpicture} $\phi_1$& \begin{tikzpicture}
\tikzset{decoration={snake,amplitude=.4mm,segment length=2mm,post length=0mm,pre length=0mm}}
\tikzset{vertex/.style = {shape=circle,draw,scale=.5}}
\tikzset{edge/.style = {->,> = latex'}}

\node[vertex] (1) at (4,0) {};
  \node[vertex] (2) at (2.5,0.866) {};
  \node[vertex] (3) at (2.5,-0.866) {};
  \node[vertex] (4) at (-1,0) {};
  \node[vertex] (5) at (0.5,0.866) {};
  \node[vertex] (6) at (0.5,-0.866) {};
  \node[vertex] (7) at (-2,0) {};
  
  \draw[edge] (1) to [bend left=50] (3);
  \draw[edge] (2) to [bend left=50] (1);
  \draw[edge] (3) to [bend left=50] (2);
  
  \draw[edge] (4) to [bend left=50] (5);
  \draw[edge] (5) to [bend left=50] (6);
  \draw[edge] (6) to [bend left=50] (4);
  
  \draw[decorate] (2) -- (5);
  \draw[decorate] (3) -- (6);
  \draw[decorate] (4) -- (7);
\end{tikzpicture}$\phi_2$\\
  \hline
 \begin{tikzpicture}
\tikzset{decoration={snake,amplitude=.4mm,segment length=2mm,post length=0mm,pre length=0mm}}
\tikzset{vertex/.style = {shape=circle,draw,scale=.5}}
\tikzset{edge/.style = {->,> = latex'}}

  \node[vertex] (1) at (4,0) {};
  \node[vertex] (2) at (2.5,0.866) {};
  \node[vertex] (3) at (2.5,-0.866) {};
  \node[vertex] (4) at (-1,0) {};
  \node[vertex] (5) at (0.5,0.866) {};
  \node[vertex] (6) at (0.5,-0.866) {};
  \node[vertex] (7) at (-2,0) {};
  
  \draw[edge] (1) to [bend right=50] (2);
  \draw[edge] (2) to [bend right=50] (3);
  \draw[edge] (3) to [bend right=50] (1);   

  \draw[edge] (4) to [bend left=50] (5);
  \draw[edge] (5) to [bend left=50] (6);
  \draw[edge] (6) to [bend left=50] (4);
  
  \draw[decorate] (2) -- (5);
  \draw[decorate] (3) -- (1);
  \draw[decorate] (4) -- (7);
  \end{tikzpicture} $\phi_3$& \begin{tikzpicture}
\tikzset{decoration={snake,amplitude=.4mm,segment length=2mm,post length=0mm,pre length=0mm}}
\tikzset{vertex/.style = {shape=circle,draw,scale=.5}}
\tikzset{edge/.style = {->,> = latex'}}

\node[vertex] (1) at (4,0) {};
  \node[vertex] (2) at (2.5,0.866) {};
  \node[vertex] (3) at (2.5,-0.866) {};
  \node[vertex] (4) at (-1,0) {};
  \node[vertex] (5) at (0.5,0.866) {};
  \node[vertex] (6) at (0.5,-0.866) {};
  \node[vertex] (7) at (-2,0) {};
  
  \draw[edge] (3) to [bend right=50] (1);
  \draw[edge] (1) to [bend right=50] (2);
  \draw[edge] (2) to [bend right=50] (3);
  
  \draw[edge] (5) to [bend right=50] (4);
  \draw[edge] (6) to [bend right=50] (5);
  \draw[edge] (4) to [bend right=50] (6);
  
  \draw[decorate] (2) -- (5);
  \draw[decorate] (3) -- (1);
  \draw[decorate] (4) -- (7);
\end{tikzpicture} $\phi_4$\\
  \hline
   \begin{tikzpicture}
\tikzset{decoration={snake,amplitude=.4mm,segment length=2mm,post length=0mm,pre length=0mm}}
\tikzset{vertex/.style = {shape=circle,draw,scale=.5}}
\tikzset{edge/.style = {->,> = latex'}}

  \node[vertex] (1) at (4,0) {};
  \node[vertex] (2) at (2.5,0.866) {};
  \node[vertex] (3) at (2.5,-0.866) {};
  \node[vertex] (4) at (-1,0) {};
  \node[vertex] (5) at (0.5,0.866) {};
  \node[vertex] (6) at (0.5,-0.866) {};
  \node[vertex] (7) at (-2,0) {};
  
  \draw[edge] (1) to [bend right=50] (2);
  \draw[edge] (2) to [bend right=50] (3);
  \draw[edge] (3) to [bend right=50] (1);   

  \draw[edge] (4) to [bend left=50] (5);
  \draw[edge] (5) to [bend left=50] (6);
  \draw[edge] (6) to [bend left=50] (4);
  
  \draw[decorate] (2) -- (5);
  \draw[decorate] (4) -- (7);
  \end{tikzpicture} & \begin{tikzpicture}
\tikzset{decoration={snake,amplitude=.4mm,segment length=2mm,post length=0mm,pre length=0mm}}
\tikzset{vertex/.style = {shape=circle,draw,scale=.5}}
\tikzset{edge/.style = {->,> = latex'}}

  \node[vertex] (1) at (4,0) {};
  \node[vertex] (2) at (2.5,0.866) {};
  \node[vertex] (3) at (2.5,-0.866) {};
  \node[vertex] (4) at (-1,0) {};
  \node[vertex] (5) at (0.5,0.866) {};
  \node[vertex] (6) at (0.5,-0.866) {};
  \node[vertex] (7) at (-2,0) {};
  
  \draw[edge] (1) to [bend right=50] (2);
  \draw[edge] (2) to [bend right=50] (3);
  \draw[edge] (3) to [bend right=50] (1);   

  \draw[edge] (4) to [bend left=50] (5);
  \draw[edge] (5) to [bend left=50] (6);
  \draw[edge] (6) to [bend left=50] (4);
  
  \draw[decorate] (3) -- (6);
  \draw[decorate] (4) -- (7);
  \end{tikzpicture}\\
  \hline
\end{tabular}
\caption{Cycle types for the homomorphisms $\Gamma \to S_7$ with transitive image. The squiggly lines correspond to the image of $S$ and the arrows correspond to the image of $R$. The graphs are unlabeled since we are interested in homomorphisms up to conjugation in $S_7$. The last row corresponds to congruence subgroups.}
\label{f:s7}
\end{figure}
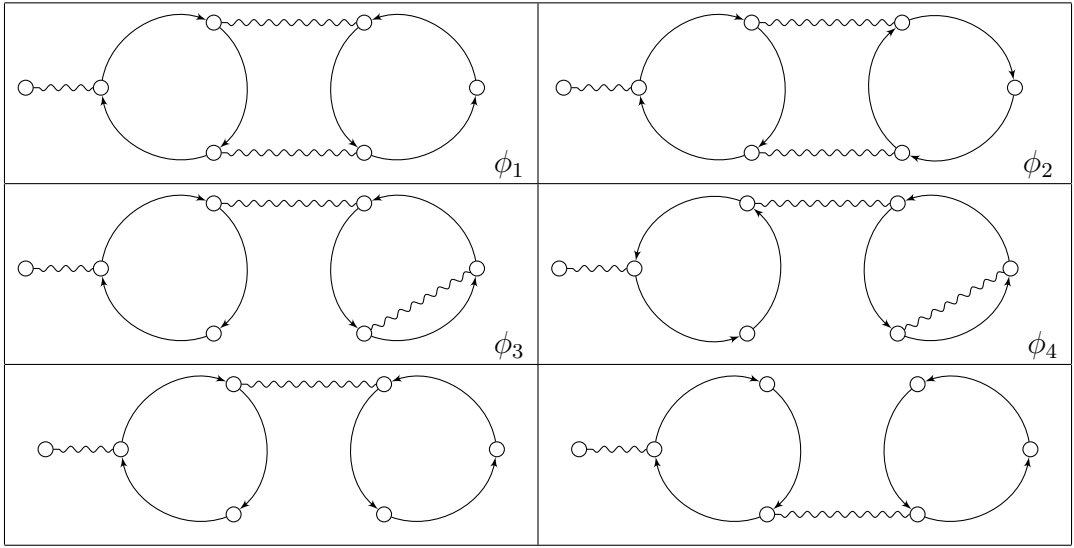

If $G\subseteq \Gamma$ is of index $7$, then the action of $G$ on the cosets defines a homomorphism $\Gamma \to S_7$ with transitive image. Since $\Gamma = \SL_2(\ZZ)/\{\pm 1\}$ is freely generated by order $2$ and $3$ elements $S$ and $R$, respectively, there are six such homomorphisms up to conjugation. They are depicted in Figure \ref{f:s7}, where the squiggly lines describe the image of $S$, and the arcs describe the image of $R$. The last row in Figure \ref{f:s7} corresponds to congruence subgroups of level $7$, and so we shall only focus on the first two rows. Call the corresponding (conjugacy classes) of homomorphisms $\phi_j$ as in Figure \ref{f:s7}. To be more precise, we pick the representatives for the conjugacy classes as in Table \ref{t:homs}.
\begin{table}
  \renewcommand{\arraystretch}{1.5}
  \begin{tabular}{|c|c|c|c|c|}
    \hline
    &$\phi_1$ & $\phi_2$ & $\phi_3$ & $\phi_4$\\
    \hline
    $S \mapsto$& $(12)(34)(56)$ & $(12)(34)(56)$&$(12)(34)(67)$& $(12)(34)(67)$\\
    \hline
    $R \mapsto$& $(235)(467)$&$(235)(764)$&$(235)(467)$&$(253)(467)$\\
    \hline
    $T \mapsto$& $(1245)(367)$&$(12475)(36)$&$(124735)$& $(125473)$\\
    \hline
    $\abs{\im \phi_j}$ & $7!$ & $7!$ & $42$ & $42$\\
    \hline
  \end{tabular}
  \caption{Data for the noncongruence homomorphisms $\Gamma \to S_7$ with transitive image.}
  \label{t:homs}
\end{table}

Note that $\phi_1$ and $\phi_2$ are surjective onto $S_7$, while $\phi_3$ and $\phi_4$ have images of order $42$. Let
\begin{align*}
  G_j &= \phi_1^{-1}(\Stab_{\im \phi_1}(j)), &H_j &= \phi_2^{-1}(\Stab_{\im \phi_2}(j)),\\
  U_j &= \phi_3^{-1}(\Stab_{\im \phi_3}(j)), &V_j &= \phi_4^{-1}(\Stab_{\im \phi_4}(j)).
\end{align*}
The following result is known to experts, but we did not find a suitable reference in the literature.
\begin{thm}
  The groups $G_j$, $H_j$, $U_j$ and $V_j$ for $j=1,\ldots, 7$ are the noncongruence subgroups of $\Gamma$ of smallest index.
\end{thm}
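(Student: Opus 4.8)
The plan is to turn the statement into a finite computation with pairs of permutations and then apply Wohlfahrt's level criterion. Since $\Gamma = \langle S,R\rangle$ is the free product $\langle S\rangle\ast\langle R\rangle$ with $S^2=R^3=1$, a subgroup of index $m$ up to conjugacy is the same datum as a transitive permutation representation $\phi\colon\Gamma\to S_m$ up to conjugacy, and this is in turn the same as a pair $(s,r)=(\phi(S),\phi(R))$ with $s^2=r^3=1$ generating a transitive subgroup of $S_m$, taken up to simultaneous conjugation. The subgroup is then a point stabilizer, and the $m$ point stabilizers attached to a single $\phi$ form a single conjugacy class of index-$m$ subgroups. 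Throughout I would fix the convention $\phi(T)=sr$ used in Table \ref{t:homs}.

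First I would recall Wohlfahrt's theorem: if $G$ has level $n$, where $n$ is the least common multiple of the cusp widths --- equivalently of the cycle lengths of $\phi(T)=sr$ --- then $G$ is congruence if and only if $\Gamma(n)\subseteq G$. As $\Gamma(n)$ is normal, this is equivalent to $\phi$ factoring through the reduction $\Gamma\twoheadrightarrow \Gamma/\Gamma(n)\cong \SL_2(\ZZ/n\ZZ)/\{\pm 1\}$; in particular $\abs{\im\phi}$ must divide $\abs{\Gamma/\Gamma(n)}$, and failure of this divisibility already forces $G$ to be noncongruence. Next I would run the enumeration for $m=7$: one checks that there are exactly the six classes of Figure \ref{f:s7}. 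For $\phi_1,\dots,\phi_4$ the data of Table \ref{t:homs} give $\phi(T)$ of cycle types $(4,3)$, $(5,2)$, $(6,1)$, $(6,1)$, hence levels $12,10,6,6$, whereas the image orders are $\abs{S_7}=5040$ (twice) and $42$ (twice). Since $\abs{\Gamma/\Gamma(12)}=576$, $\abs{\Gamma/\Gamma(10)}=360$, and $\abs{\Gamma/\Gamma(6)}=72$, and none of $5040,5040,42,42$ divides the corresponding order, all four representations are noncongruence. The two remaining classes have image $\PSL_2(\FF_7)$ of order $168$ and level $7$, and they factor through $\Gamma/\Gamma(7)\cong\PSL_2(\FF_7)$ (they are the point stabilizers of the two conjugacy classes of subgroups $S_4\subset\PSL_2(\FF_7)$), so they are congruence. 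Thus at index $7$ the noncongruence subgroups are exactly the $G_j,H_j,U_j,V_j$, giving $4\times 7=28$ groups in four conjugacy classes.

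It remains to show there is no noncongruence subgroup of index $m\le 6$, and this is the main obstacle. The plan is the same: list all transitive pairs $(s,r)$ of degree $\le 6$ up to conjugacy, read off $n$ from the cycle type of $sr$, and verify in each case that $\phi$ factors through $\Gamma/\Gamma(n)$. The delicate point is that a transitive pair can have a large image even in small degree --- for instance $A_5\cong\PSL_2(\FF_5)$ or $S_5$ in degree $5$, and $A_6$, $S_6$, or $\PSL_2(\FF_5)$ in degree $6$ --- so the crude divisibility test does not settle these cases. One must rule out the existence of an exotic surjection $\Gamma\to A_5$, say, whose degree-$5$ action fails to contain $\Gamma(n)$, since such an object would produce a noncongruence subgroup of index $5$ and contradict minimality. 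Establishing that every such low-degree representation is in fact the standard congruence one --- by a careful case analysis on the admissible cycle types of $s$, $r$, and $sr$, or equivalently by consulting the classification of low-index subgroups of $\Gamma$ --- is where the real work lies.

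In summary, I expect the entire argument to reduce to Wohlfahrt's criterion plus a finite enumeration; the index-$7$ analysis is immediate from the order obstruction, while the genuine content is the verification that no transitive representation of degree at most $6$ escapes the congruence quotients, which I would either do by hand on cycle types or confirm with a short machine computation.
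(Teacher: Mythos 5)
Your index-$7$ analysis is correct, and it takes a cleaner, more uniform route than the paper does. Where you run Wohlfahrt's criterion as a pure order obstruction ($\abs{\im\phi}$ must divide $\abs{\Gamma/\Gamma(n)}$, with $5040\nmid 576$, $5040\nmid 360$ and $42\nmid 72$), the paper argues case by case: for the $G_j$ and $H_j$ it invokes the simplicity of $A_7$, and for the $U_j$ and $V_j$ it observes that $T$ has order $6$ in $\Gamma/\ker\phi_3$, cites Schoeneberg to force $\Gamma(6)\subseteq U_j$ if $U_j$ were congruence, and then notes that no congruence subgroup of index $7$ and level $6$ exists --- which in substance is the same divisibility computation you perform (an index-$7$ subgroup of $\Gamma/\Gamma(6)$ would have order $72/7$). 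One point you gloss over that the paper treats carefully: the pairwise distinctness of the twenty-eight subgroups, in particular $U_i\neq V_j$, which the paper settles by checking $SRSR^2S\in U_1\setminus V_1$. In your framework distinctness is subsumed in the claim that the enumeration yields exactly six conjugacy classes of transitive pairs; but note that $\phi_3$ and $\phi_4$ assign the \emph{same} cycle types to $S$, $R$ and $T$, so their non-conjugacy in $S_7$ is a genuine verification and not something read off from cycle types. You should make that check explicit, along with the (easy) argument that the seven point stabilizers of a single $\phi_j$ are pairwise distinct.

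The part you flag as ``the main obstacle'' --- that every subgroup of index at most $6$ is congruence --- is precisely Theorem 5 of Wohlfahrt's paper, which the authors simply cite. Your proposed enumeration of transitive pairs $(s,r)$ of degree at most $6$ would indeed work, and your worry about exotic images is the right one: for instance, one must know that every surjection $\Gamma\to A_5$ arising in degree $5$ is the standard congruence map $\Gamma\to\Gamma/\Gamma(5)\cong\PSL_2(\FF_5)$ up to automorphism (this can be shown by counting generating pairs with $s^2=r^3=1$, which form a single $\Aut(A_5)$-orbit). But as written your proposal defers this verification rather than carrying it out, so the argument is incomplete until you either execute that finite computation or cite Wohlfahrt's Theorem 5. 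Similarly, your assertion that the two remaining degree-$7$ classes factor through $\Gamma/\Gamma(7)$ is stated without justification --- though here you are at parity with the paper, which asserts the same. With the Wohlfahrt citation in place, your proof is complete, and at index $7$ it is arguably tidier than the paper's.
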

\begin{proof}
  Theorem 5 of \cite{Wohlfahrt} shows that all subgroups of $\Gamma$ of index $\leq 6$ are congruence. That these groups are noncongruence will follow by our proof of unbounded denominators, but this can also be proved by more elementary means: for example, for the $G_j$ and $H_j$ it follows easily by the simplicity of $A_7$. For the $U_j$, note that $T$ has order $6$ in the quotient $\Gamma/\ker \phi_3$. Hence by Theorem 2 in $\S 3.1$ of \cite{Schoeneberg}, if $U_j$ were congruence it would have to contain $\Gamma(6)$. But by elementary group theory one sees that there are no congruence subgroups of index $7$ and level $6$. Therefore the $U_j$ (and similarly the $V_j$) are noncongruence. By considering the image of $T$, it is clear that all of these subgroups are distinct except possibly for some identity $U_i=V_j$, and without loss of generality we may assume $U_1 = V_j$ for some $j$. Since $R \in U_1$, and the only $V_j$ that contains $R$ is $V_1$, we thus would have $U_1 = V_1$. But one can easily check that $U_1$ contains $SRSR^2S$, while $V_1$ does not. Therefore $U_1 \neq V_1$ and hence $U_i\neq V_j$ for all $i$ and $j$.

  The only other subgroups of index $7$ come from the last line in Figure \ref{f:s7}, but those homomorphisms yield the congruence subgroups of level $7$ and index $7$.
\end{proof}
\begin{rmk}
In general, the number of subgroups of index $n$ in $\Gamma$ can be counted using Exercise 5.13 of \cite{Stanley}.
\end{rmk}

In the remainder of this section we shall describe some group theoretic data for $G_1$, $H_1$, $U_1$ and $V_1$ that will be useful for what follows. The analogous data for the other groups can be obtained by conjugation.

\subsection{Data for $G_1$}
A fundamental domain for $G_1$ is given in Figure \ref{f:fundom1} on page \pageref{f:fundom1}.
\begin{figure}
  \includegraphics[scale=0.7]{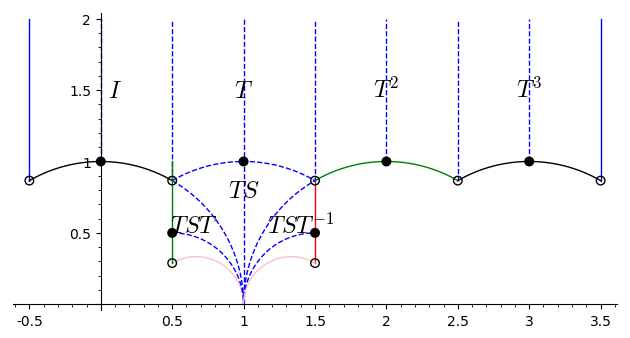}
  \caption{A fundamental domain for $G_1$. The colours of the edges describe the edge pairing.}
  \label{f:fundom1}
\end{figure}
 The elliptic points are represented by $\zeta_3$ and $\tfrac{1}{2}(3+i)$. The edge pairing is defined by the following matrices:
\begin{align*}
  T^4 &= \twomat 1401, & C_2 &= \twomat{-3}{1}{-1}{0},\\
  C_1 &= \twomat{4}{-3}{3}{-2}, & E_1 &= \twomat{3}{-5}{2}{-3}.
\end{align*}
Here $T^4$ identifies the vertical blue sides, $C_1$ identifies the pink edges, $C_2$ identifies the green and black edges with their pairs, and $E_1$ identifies the red edge with itself. Note that $R = C_2^{-1}C_1$, so that if we use $R$ as a generator, we can dispense with $C_2$. Similarly, $C_1 = E_1^{-1}CR^{-1}$, so that we can also dispense with $C_1$. By standard results on Fuchsian groups, one obtains the following presentation for $G_1$:
\[G_1 = \langle T^4,E_1,R \mid E_1^2 = R^3=1\rangle.\]

\subsection{Data for $H_1$} A fundamental domain for $H_1$ is given in Figure \ref{f:fundom2} on page \pageref{f:fundom2}.
\begin{figure}
  \includegraphics[scale=0.7]{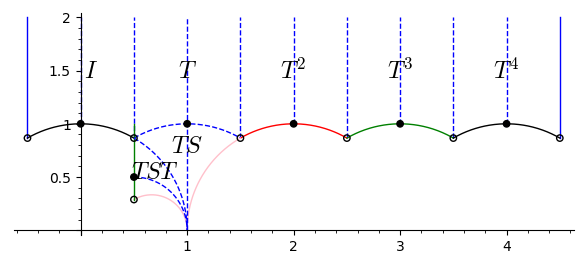}
  \caption{A fundamental domain for $H_1$. The colours of the edges describe the edge pairing.}
  \label{f:fundom2}
\end{figure}
 The elliptic points are represented by $\zeta_3$ and $2+i$. The edge pairing is defined by the following matrices:
\begin{align*}
  T^5 &= \twomat 1501, & D_2 &= \twomat{-4}{1}{-1}{0},\\
  D_1 &= \twomat{1}{-2}{2}{-3}, & E_2 &= \twomat{2}{-5}{1}{-2}.
\end{align*}
Here $T^5$ identifies the vertical blue sides, $D_1$ identifies the pink edges, $D_2$ identifies the green and black edges with their pairs, and $E_2$ identifies the red edge with itself. Similarly to above we obtain a presentation
\[H_1 = \langle T^5,E_2,R \mid E_2^2 = R^3=1\rangle.\]
In particular $G_1 \cong H_1$ but this will play no role in what follows.

\subsection{Data for $U_1$} A fundamental domain for $U_1$ is given in Figure \ref{f:fundom3} on page \pageref{f:fundom3}.
\begin{figure}
  \includegraphics[scale=0.8]{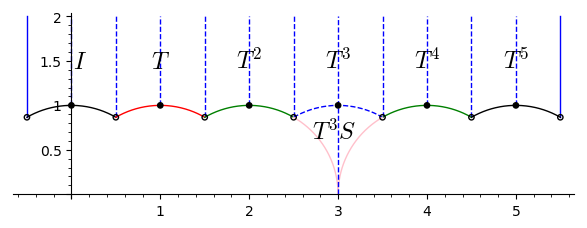}
  \caption{A fundamental domain for $U_1$. The colours of the edges describe the edge pairing.}
  \label{f:fundom3}
\end{figure}
 The elliptic points are represented by $\zeta_3$ and $1+i$. The edge pairing is defined by the following matrices:
\begin{align*}
  T^6 &= \twomat 1601, & A_2 &= \twomat{-5}{1}{-1}{0},\\
  A_1 &= \twomat{-2}{9}{-1}{4}, & E_3 &= \twomat{1}{-2}{1}{-1}.
\end{align*}
Here $T^6$ identifies the vertical blue sides, $A_1$ identifies the pink and green edges with their pairs, $A_2$ identifies the black edges, and $E_3$ identifies the red edge with itself. Similarly to above we obtain a presentation
\[U_1 = \langle T^6,E_3,R \mid E_3^2 = R^3=1\rangle.\]

\subsection{Data for $V_1$} A fundamental domain for $V_1$ is given in Figure \ref{f:fundom4} on page \pageref{f:fundom4}.
\begin{figure}
  \includegraphics[scale=0.8]{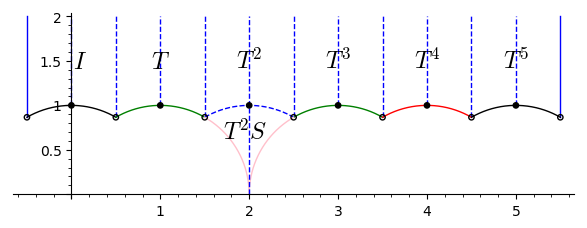}
  \caption{A fundamental domain for $V_1$. The colours of the edges describe the edge pairing.}
  \label{f:fundom4}
\end{figure}
 The elliptic points are represented by $\zeta_3$ and $4+i$. The edge pairing is defined by the following matrices:
\begin{align*}
  T^6 &= \twomat 1601, & B_2 &= \twomat{-5}{1}{-1}{0},\\
  B_1 &= \twomat{-1}{4}{-1}{3}, & E_4 &= \twomat{4}{-17}{1}{-4}.
\end{align*}
Here $T^6$ identifies the vertical blue sides, $B_1$ identifies the pink and green edges with their pairs, $B_2$ identifies the black edges, and $E_4$ identifies the red edge with itself. Similarly to above we obtain a presentation
\[V_1 = \langle T^6,E_4,R \mid E_4^2 = R^3=1\rangle.\]

\section{Outline of proof of Theorem \ref{t:main}}
We begin the proof of Theorem \ref{t:main} with an elementary reduction.
\begin{lem}
  \label{l:reduction1}
  To prove Theorem \ref{t:main} it suffices to treat the cases $G = G_1$, $G_3$, $H_1$, $H_3$, $U_1$, $U_6$, $V_1$ and $V_6$.
\end{lem}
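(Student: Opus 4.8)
The plan is to reduce the twenty-eight groups to the eight listed representatives by exploiting conjugacy inside $\Gamma$, taking care to conjugate only by elements that fix the cusp $\infty$, since that is the cusp at which both the hypothesis and the conclusion of Theorem~\ref{t:main} are phrased. The starting observation is that within each family the seven subgroups form a single $\Gamma$-conjugacy class: since each $\phi_i$ has transitive image, the point stabilizers for varying $j$ are all conjugate inside that image, so their preimages $G_1,\dots,G_7$ are $\Gamma$-conjugate, and likewise for the $H_j$, $U_j$ and $V_j$. Conjugation by $\gamma\in\Gamma$ gives an isomorphism $M_k(G)\to M_k(\gamma G\gamma^{-1})$ that preserves $M_k(\Gamma)$, so the only issue is that a general $\gamma$ moves the distinguished cusp.

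First I would identify the cusp $\infty$ of each $G_j$ with the $\langle\phi_1(T)\rangle$-orbit of $j$ on $\{1,\dots,7\}$, its width being the length of the cycle of $\phi_1(T)$ through $j$. Reading $\phi_1(T)=(1245)(367)$ off Table~\ref{t:homs}, the orbits are $\{1,2,4,5\}$ and $\{3,6,7\}$; likewise $\phi_2(T)=(12475)(36)$ gives $\{1,2,4,5,7\}$ and $\{3,6\}$, while $\phi_3(T)$ and $\phi_4(T)$ are six-cycles fixing $6$, with orbits $\{1,2,3,4,5,7\}$ and $\{6\}$. Thus each family has exactly two cusps, and the eight representatives are chosen so that their cusp $\infty$ realizes the two orbits in turn: $G_1,G_3$; $H_1,H_3$; $U_1,U_6$; and $V_1,V_6$.

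The key step is that indices $j,j'$ in the same $\langle\phi_1(T)\rangle$-orbit are conjugate by a power of $T$: if $\phi_1(T^m)(j')=j$ then $T^{-m}G_jT^m=G_{j'}$, and since $T^m$ fixes $\infty$ this conjugation preserves the cusp $\infty$. Concretely, for $f=\sum_n a_nq_N^n\in M_k(G_j)$ the translate $g=f|_kT^m\in M_k(G_{j'})$ has expansion $g=\sum_n a_n\zeta_N^{mn}q_N^n$ at $\infty$, where $N$ is the common width. As $\zeta_N$ is an algebraic unit, this multiplication leaves unchanged (i) algebraicity of the Fourier coefficients at $\infty$, (ii) the property of having bounded or unbounded denominators, and (iii) non-membership in $M_k(\Gamma)$. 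Hence Theorem~\ref{t:main} for $G_j$ is equivalent to Theorem~\ref{t:main} for the representative lying in the same orbit, and the identical argument applies to the $H$, $U$ and $V$ families.

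Combining these, every one of the twenty-eight groups is carried to one of the eight representatives by a cusp-$\infty$-preserving translation, which is exactly the asserted reduction. I do not expect a real obstacle here: the only substantive point is the translation-invariance of the unbounded-denominator property, which is immediate from the root-of-unity computation. The care required is entirely bookkeeping --- checking from Table~\ref{t:homs} that each family has precisely two cusps and that the two chosen representatives lie in distinct $\langle\phi_i(T)\rangle$-orbits --- together with the discipline of conjugating by powers of $T$ only, rather than by a general element of $\Gamma$, which would relocate the cusp $\infty$ and break the correspondence with the statement of the theorem.
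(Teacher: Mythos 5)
Your proposal is correct and follows essentially the same route as the paper: conjugating within each family by powers of $T$ (which fix the cusp $\infty$), so that Fourier coefficients are merely multiplied by roots of unity, preserving algebraicity, (un)boundedness of denominators, and non-membership in $M_k(\Gamma)$. Your explicit bookkeeping with the $\langle\phi_i(T)\rangle$-orbits from Table \ref{t:homs} simply spells out what the paper compresses into ``an identical argument applies to the other cases.''
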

\begin{proof}
We first consider the case of $G_1$, which is conjugate with $G_2$, $G_4$ and $G_5$ via powers of $T$. Therefore, if $f \in M_k(G_1)$ has Fourier expansion $f = \sum_{n\geq 0}a_nq_4^n$ where $q_4= e^{2\pi i\tau/4}$, then the forms $f(\tau+m)=\sum_{n\geq 0} a_ni^{mn}q_4^n$ are forms on the various conjugate groups, and vice versa. Hence Theorem \ref{t:main} holds for $G_1$ if and only if it holds for any one of $G_1$, $G_2$, $G_4$ or $G_5$. An identical arguments applies to the other cases.
\end{proof}
While not strictly necessary, we can describe the structure of $M(G)$ as an $M(\Gamma)$-module, which then immediately gives the dimensions of the graded pieces of $M(G)$ (see Table \ref{t:dims}).
\begin{lem}
  \label{l:reduction2}
  Let $G$ be any of the noncongruence subgroups of $\Gamma$ of index $7$. Then there exists a free-basis for $M(G)$ as an $M(\Gamma)$-module with generators in weights $0$, $2$, $4$, $4$, $6$, $6$ and $8$ such that the generators have algebraic Fourier coefficients.
\end{lem}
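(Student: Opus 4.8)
The plan is to prove the stronger structural statement that $M(G)$ is a \emph{free} graded module over $M(\Gamma) = \CC[E_4,E_6]$; once freeness is known, the multiset of weights of any homogeneous basis is forced by the Hilbert series, which I compute by a dimension count, and it remains only to check that a homogeneous basis can be chosen from forms with algebraic Fourier coefficients. Since $M(\Gamma)$ is a graded polynomial ring in two variables, freeness is equivalent to $M(G)$ being a maximal Cohen--Macaulay $M(\Gamma)$-module, i.e. to $(E_4,E_6)$ being a regular sequence on $M(G)$.

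To establish this, first note that $M(G)$ is a finitely generated $M(\Gamma)$-module: each $f\in M_k(G)$ is integral over $M(\Gamma)$ because the coefficients of $\prod_{\gamma\in G\backslash\Gamma}(X - f\mid_k\gamma)$ lie in $M(\Gamma)$. Multiplication by $E_4$ is injective on $M(G)$ since $M(G)$ is a domain (it embeds in the meromorphic functions on $\uhp$), so $E_4$ is a nonzerodivisor. For the second step one checks that $E_6$ is a nonzerodivisor on $M(G)/E_4M(G)$: if $f\in M_k(G)$ and $E_6f = E_4g$ with $g\in M_{k+2}(G)$, then $f/E_4 = g/E_6$ is a weight $k-4$ meromorphic modular form which is holomorphic on $\uhp$ --- away from the zeros of $E_4$ it equals $g/E_6$, and at a zero $\rho$ of $E_4$ one has $E_6(\rho)\neq 0$, since $E_4$ and $E_6$ have disjoint zero loci on $X(\Gamma)$ --- and holomorphic at the cusps, where $E_4$ is nonvanishing. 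Hence $f/E_4\in M_{k-4}(G)$ and $f\in E_4M(G)$, as needed. Thus $(E_4,E_6)$ is a regular sequence, $\operatorname{depth}_{M(\Gamma)}M(G)=2=\dim M(\Gamma)$, and Auslander--Buchsbaum gives projective dimension $0$; a finitely generated projective module over the connected graded ring $M(\Gamma)$ is graded free, with generator weights read off from its Hilbert series by graded Nakayama.

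Next I would pin down the Hilbert series. From the permutation data in Table \ref{t:homs} one reads off, for each of the four classes, that $G$ has genus $0$ with one elliptic point of each order $2$ and $3$ and exactly two cusps (Riemann--Hurwitz applied to $X(G)\to X(\Gamma)$ confirms $g=0$ in every case, the ramification over $j=0,1728,\infty$ totalling $12$). The standard dimension formula then gives $\dim M_k(G) = 1 + \lfloor k/4\rfloor + \lfloor k/3\rfloor$ for even $k\geq 2$, so that
\[
  \sum_{k}\dim M_k(G)\,t^k \;=\; \frac{1 + t^2 + 2t^4 + 2t^6 + t^8}{(1-t^4)(1-t^6)}.
\]
As $M(G)$ is graded free, the numerator $1+t^2+2t^4+2t^6+t^8$ equals $\sum_i t^{d_i}$ over the weights $d_i$ of a homogeneous basis, which forces the generator weights $0,2,4,4,6,6,8$.

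Finally, for algebraicity I would fix in each relevant weight a meromorphic modular form $g_0$ for $\Gamma$ with rational Fourier coefficients (a monomial in $E_4,E_6,\Delta$ of the correct weight) and use the hauptmodul $t$ produced in step (a), which has algebraic Fourier coefficients. Writing $f = hg_0$ identifies $M_k(G)$ with a Riemann--Roch space $L(D_k)$ of rational functions $h\in\CC(t)$ with poles bounded by a divisor $D_k$ on $X(G)\cong\PP^1$; because the Belyi data, and hence $D_k$, are defined over $\Qbar$, the space $L(D_k)$ admits a basis of functions in $\Qbar(t)$, and the corresponding forms $hg_0$ have algebraic Fourier coefficients. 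Selecting, weight by weight, new elements realizing the jumps in $\dim M_k(G)$ produces a homogeneous basis with algebraic coefficients. I expect the genuine work to lie in this last step: one must check that the $\Qbar$-rational structure coming from $q$-expansions is compatible with the $M(\Gamma)$-module structure, so that the algebraically chosen generators actually form a \emph{free} basis and not merely a spanning set.
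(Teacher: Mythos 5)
Your proof is correct, but it takes a genuinely different route from the paper's. The paper passes to vector-valued modular forms: letting $\rho$ be the $7$-dimensional permutation representation of $\Gamma$ on the cosets of $G$, it identifies $M(G)\cong M(\rho)$, decomposes $\rho$ as the trivial representation plus an even $6$-dimensional irreducible, and invokes the free-module theorem together with the Riemann--Roch computation of Example 7.4 of \cite{CandeloriFranc}; the generating weights are read off from the exponents of $\rho(T)$ (whose sum is $5/2$ in all four cases), and algebraicity comes from diagonalizing $\rho(T)$ over a number field and running the Frobenius method. You instead prove freeness by direct commutative algebra --- $(E_4,E_6)$ is a regular sequence on $M(G)$ since $M(G)$ is a domain and $E_4,E_6$ have disjoint zero loci and are nonvanishing at cusps, then depth $=2$, Auslander--Buchsbaum, graded Nakayama --- and you extract the weights from the Hilbert series via the signature data (genus $0$, one elliptic point of each order, two cusps), which indeed checks out against Table \ref{t:homs} and reproduces Table \ref{t:dims}; your regular-sequence argument is essentially the scalar shadow of the proof of the free-module theorem the paper cites, so your route buys self-containedness at the cost of a per-case combinatorial check, while the paper's buys uniformity and avoids dimension formulas. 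Two loose ends in your write-up are real but standard: elementwise integrality does not by itself give module-finiteness of $M(G)$ over $M(\Gamma)$ (fix: $M(G)$ sits inside the normalization of $M(\Gamma)$ in a finite extension of $\Frac M(\Gamma)$, which is module-finite because $M(\Gamma)$ is a finitely generated $\CC$-algebra); and the compatibility worry in your final step dissolves by graded Nakayama, since $E_4,E_6$ have rational $q$-expansions, so $E_4M_{k-4}(G)+E_6M_{k-6}(G)$ is $\Qbar$-rational inside the $\Qbar$-structure your Riemann--Roch argument provides (forms with algebraic coefficients that are $\Qbar$-independent are $\CC$-independent), whence algebraic homogeneous lifts of a basis of $M(G)/(E_4,E_6)M(G)$ form a free basis. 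Note also that your weight-$2$ ``monomial'' $g_0$ must be a Laurent monomial such as $E_6/E_4$ --- exactly the device the paper uses in weight $2$ in its completion of the proof of Theorem \ref{t:main}.
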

\begin{proof}
  Let $\rho$ be the representation of $\Gamma$ obtained from the permutation representation of $G$ on its cosets in $\Gamma$, so that if $M(\rho)$ is the corresponding space of vector-valued modular forms, then there is an isomorphism $M(G) \cong M(\rho)$. Since $\rho$ decomposes as the trivial representation plus an even $6$-dimensional irreducible representation for each of these $G$, which at the level of scalar forms corresponds to the decomposition $M(G) = M(\Gamma) \oplus \ker \Tr$ where $\Tr \colon M(G) \to M(\Gamma)$ is the congruence trace, one can use Riemann-Roch to find the weights of a free-basis for $M(\rho)$ of the desired form --- see Example 7.4 of \cite{CandeloriFranc} where $\phi_1$ and $\phi_2$ are treated explicitly. Both $\phi_3$ and $\phi_4$ are analogous to that case, as the local monodromies around elliptic points are conjugate in all cases, and the sum of the exponents of $\rho(T)$ is $5/2$ in each of these four cases, as can be easily read off from the cycle type of $\psi_j(T)$ (not that this does not hold for the congruence subgroups of index $7$ where $T$ acts as a seven-cycle).

  To obtain a free-basis with algebraic Fourier coefficients, one could for example diagonalize $\rho(T)$ and use the Frobenius method (in fact, in the course of the proof of Theorem \ref{t:main} we will write down an explicit free-basis with algebraic Fourier coefficients). Since $\rho(T)$ can diagonalized over a finite extension of $\QQ$, algebraicity of Fourier coefficients will be preserved under this operation.
\end{proof}

\begin{table}
  \begin{tabular}{|c|c|}
    \hline
    $k$ & $\dim M_k(G)$\\
    \hline
    $0$&$1$\\
    $2$&$1$\\
    $4$&$3$\\
    $6$&$4$\\
    $8$&$5$\\
    $10$&$6$\\
    $k\geq 12$&$\dim M_{k-12}(G)+7$\\
    \hline
\end{tabular}
\caption{Dimension of $M_k(G)$ for the noncongruence subgroups $G$ of index $7$.}
\label{t:dims}
\end{table}

\begin{rmk}
One can easily use the hauptmoduls described below to compute explicit free-bases for $M(G)$ as in Lemma \ref{l:reduction2}, through taking derivatives and products of forms, but we have no need for such a free-basis in this paper.
\end{rmk}

In light of Lemmas \ref{l:reduction1} and \ref{l:reduction2}, we can now proceed as follows:
\begin{enumerate}
\item describe a hauptmodul for each of the genus $0$ groups $G_1$, $G_3$, $H_1$, $H_3$, $U_1$, $U_6$, $V_1$ and $V_6$;
\item establish unbounded denominators for each hauptmodul;
\item express modular forms in $M(G)$ as forms of level one times rational functions in the hauptmodul, and deduce unbouded denominators as a result.
\end{enumerate}
We shall give all of the details for the group $G_1$ in Section \ref{s:details}, and then in Section \ref{s:other} we shall summarize the key facts that allow one to carry out the same argument for the other groups.

\section{Details for the group $G_1$}
\label{s:details}
Following Atkin--Swinnerton-Dyer \cite{ASD}, we compute a hauptmodul for the genus $0$ group $G_1$. Note that this hauptmodul is an example of a uniformizing Belyi map, and there exists an extensive literature on computing Belyi maps -- see \cite{Voight1}, \cite{Voight2} and the references contained therein for more information.

Recall that $j = \tfrac 1q + 744 + 196884q +\cdots$. In the setting of $G_1$, ASD solve for a hauptmodul $z = q_4^{-1} + 0 + O(q_4)$ (so that $q_4 = \xi$ in the ASD notation) by introducing polynomials:
\begin{align*}
  a_1 &= z+c_1,\\
  f_3 &= z^2+c_2z+c_3,\\
  e_3 &= z+c_4,\\
  f_2 &= z^3+c_5z^2+c_6z+c_7,\\
  e_2 &= z+c_8,
\end{align*}
and then solving for the unknown $c_j$'s via the $j$-equations:
\begin{align*}
  ja_1^3 &= f_3^3e_3,\\
  (j-1728)a_1^3 &= f_2^2e_2.
\end{align*}
Eliminating $j$ from these two equations gives the system of nonlinear equations:
\begin{align*}
  0 &= -c_{3}^{3} c_{4} + 1728 c_{1}^{3} + c_{7}^{2} c_{8},\\
  0 &=-3 c_{2} c_{3}^{2} c_{4} - c_{3}^{3} + 2 c_{6} c_{7} c_{8} + 5184 c_{1}^{2} + c_{7}^{2},\\
  0 &=-3 c_{2}^{2} c_{3} c_{4} - 3 c_{2} c_{3}^{2} - 3 c_{3}^{2} c_{4} + c_{6}^{2} c_{8} + 2 c_{5} c_{7} c_{8} + 2 c_{6} c_{7} + 5184 c_{1},\\
  0 &=-c_{2}^{3} c_{4} - 3 c_{2}^{2} c_{3} - 6 c_{2} c_{3} c_{4} + 2 c_{5} c_{6} c_{8} - 3 c_{3}^{2} + c_{6}^{2} + 2 c_{5} c_{7} + 2 c_{7} c_{8} + 1728,\\
  0 &=-c_{2}^{3} - 3 c_{2}^{2} c_{4} + c_{5}^{2} c_{8} - 6 c_{2} c_{3} - 3 c_{3} c_{4} + 2 c_{5} c_{6} + 2 c_{6} c_{8} + 2 c_{7},\\
  0 &=-3 c_{2}^{2} - 3 c_{2} c_{4} + c_{5}^{2} + 2 c_{5} c_{8} - 3 c_{3} + 2 c_{6},\\
  0 &=-3 c_{2} - c_{4} + 2 c_{5} + c_{8}.
\end{align*}
Furthermore, if we expand the $j$-equations in $z$ and compare constant terms, we get an additional linear equation $3c_1-3c_2-c_4 = 0$. Finally, we must insist that $a_1$, $f_3$, $e_3$, $f_2$ and $e_2$ have distinct roots as polynomials in $z$. This means we have the nonequalities:
\begin{align*}
  (c_1-c_4)(c_1-c_8)(c_4-c_8) &\neq 0,\\
  c_j^2+c_2c_j+c_3 &\neq 0 \quad\quad (j=1,4,8),\\
  c_j^3+c_5c_j^2+c_6c_j+c_7 &\neq 0 \quad\quad (j = 1,4,8),\\
  c_2^2 - 4c_3 &\neq 0,\\
  c_5^2c_6^2-4c_6^3-4c_5^3c_7-27c_7^2+18c_5c_6c_7 &\neq 0.
\end{align*}
We put these into \emph{Macaulay 2} and performed a Grobner basis computation to find the following substitutions:
\begin{align*}
  c_1 &= \tfrac{2}{3}c_5+\tfrac{1}{3} c_8,\\
  c_3 &= 2c_2^2-2c_2c_5+\tfrac{1}{3}c_5^2-c_2c_8+\tfrac{2}{3}c_5c_8+\tfrac{2}{3}c_6,\\
  c_4 &= -3c_2+2c_5+c_8,\\
  c_7 &= -\tfrac{3}{7}(c_2^3+\tfrac{3}{2}c_2c_5^2-\tfrac{119}{81}c_5^3+\tfrac 23 c_2^2c_8+c_2c_5c_8-\tfrac{70}{27}c_5^2c_8+\tfrac{7}{6}c_2c_8^2- \tfrac{52}{81}c_8^3\\
  &\quad\quad- \tfrac{11}{3}c_2c_6 + 3c_5c_6 + \tfrac{32}{9}c_6c_8).
\end{align*}
This leaves a system of $5$ equations in the unknowns $c_2$, $c_5$, $c_6$ and $c_8$. With some effort involving saturating with respect to the last nonequality condition, we managed to find the following solution: if $u = \sqrt[4]{-7}/7^2$ then
\begin{align*}
  c_1 &= 168u, &  c_2 &= 256u,\\
  c_3 &= 10869u^2,&  c_4 &= -264u,\\
  c_5 &= 160u,&  c_6 &= -28968u^2,\\
  c_7 &= -5900544u^3,&  c_8 &= 184u.
\end{align*}
Given this, one can recursively solve for the $q_4$-expansion coefficients of $z$ using the $j$-equations. In Table \ref{t:hauptmodul} on page \pageref{t:hauptmodul} we list the rational part of the Fourier coefficients of the hauptmodul $z$.

\begin{table}
  \begin{tabular}{|c|c|}
    \hline
    $n$ & $a_n/u^{n+1}$\\
    \hline
    $-1$ & $ 1 $ \\
$0$ & $ 0 $ \\
$1$ & $ 2^{2} \cdot 3 \cdot 7 \cdot 173 $ \\
$2$ & $ 2^{11} \cdot 7 \cdot 43 $ \\
$3$ & $ -1 \cdot 2 \cdot 3 \cdot 7 \cdot 173 \cdot 199 $ \\
$4$ & $ -1 \cdot 2^{14} \cdot 3^{9} \cdot 7 $ \\
$5$ & $ -1 \cdot 2^{3} \cdot 7 \cdot 17 \cdot 89 \cdot 1969543 $ \\
$6$ & $ -1 \cdot 2^{13} \cdot 3 \cdot  \cdot 5273 \cdot 47339 $ \\
$7$ & $ 3^{2} \cdot 7 \cdot 11 \cdot 19 \cdot 26353729 $ \\
$8$ & $ 2^{17} \cdot 3^{10} \cdot 7 \cdot 31 \cdot 67 \cdot 131 $ \\
$9$ & $ 2^{2} \cdot 3 \cdot 7 \cdot 11869625271733553 $ \\
$10$ & $ 2^{12} \cdot 3 \cdot 7 \cdot 17 \cdot 1579 \cdot 36677 \cdot 385321 $ \\
$11$ & $ 2 \cdot 3 \cdot 7^2 \cdot 7204271 \cdot 2154711443 $ \\
    \hline
  \end{tabular}
  \caption{Normalized Fourier coefficients of the hauptmodul for $G_1$.}
  \label{t:hauptmodul}
\end{table}

\begin{lem}
The hauptmodul has unbounded denominators.
\end{lem}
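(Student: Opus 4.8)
The plan is to reduce the lemma to a statement about $7$-adic valuations and then to prove that statement by reducing the $j$-equation modulo $7$. Set $\pi = \sqrt[4]{-7}$, so that $u = \pi/7^2$; since $7$ is totally ramified in $\QQ(\pi)$ with $v_\pi(7) = 4$, we have $v_\pi(u) = 1 - 8 = -7$. Writing $z = \sum_{n\ge -1} a_n q_4^n$ and $r_n = a_n/u^{n+1}$ as in Table~\ref{t:hauptmodul}, we get
\[
v_\pi(a_n) = 4\,v_7(r_n) - 7(n+1),
\]
so the denominators of $z$ are bounded at $\pi$ if and only if $v_7(r_n) \ge \tfrac74(n+1) - C$ for some constant $C$ and all $n$. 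Hence it suffices to prove that $v_7(r_n)$ grows strictly more slowly than this; I will in fact show the stronger statement that $7 \nmid r_n$ for infinitely many $n$, which already forces $v_\pi(a_n) = -7(n+1) \to -\infty$ along a subsequence.

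To get at the $r_n$, I would pass to the rescaled variables $x = u q_4$ and $y = z/u = \sum_{n\ge-1} r_n x^n$. The explicit values of the $c_i$ give $a_1 = u(y+168)$, $f_3 = u^2(y^2 + 256 y + 10869)$ and $e_3 = u(y - 264)$, so the $j$-equation $j a_1^3 = f_3^3 e_3$ becomes, after cancelling $u^3$ and using $u^4 = -7^{-7}$,
\[
-7^7 j\,(y+168)^3 = (y^2 + 256 y + 10869)^3 (y - 264).
\]
Moreover $q = q_4^4 = u^{-4} x^4 = -7^7 x^4$, so integrality of the $q$-expansion $j = q^{-1} + 744 + \cdots$ yields $-7^7 j = x^{-4} + (\text{terms in } 7^7\ZZ[[x^4]])$; in particular $x^4\cdot(-7^7 j) \equiv 1 \pmod 7$. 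Multiplying the displayed identity by $x^7$ and setting $w = xy = 1 + O(x)$ clears the pole and produces a relation $\Phi(w,x) = 0$ between honest power series with $\ZZ_{(7)}$-coefficients, namely $x^4(-7^7 j)(w+168x)^3 = (w^2 + 256 x w + 10869 x^2)^3(w-264x)$.

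Finally I would check $7$-integrality and reduce modulo $7$. Solving $\Phi(w,x)=0$ recursively for $w$, one finds $\Phi(1,0) = 0$ and $\partial_w \Phi(1,0) = 7 - 3 = 4$, a unit in $\ZZ_{(7)}$; Hensel's lemma then produces a unique solution $w \in \ZZ_{(7)}[[x]]$, so that $r_n \in \ZZ_{(7)}$ and reduction modulo $7$ is legitimate. Reducing coefficients modulo $7$ (note $168 \equiv 0$, $256 \equiv 4$, $10869 \equiv 5$, $264 \equiv 5$) gives
\[
(\overline w^2 + 4x\overline w + 5x^2)^3(\overline w - 5x) = \overline w^{\,3} \quad\text{in } \FF_7[[x]], \qquad \overline w(0) = 1.
\]
If $\overline w$ were a polynomial of degree $d \ge 1$, then $\overline w^2 + 4x\overline w + 5x^2$ has degree exactly $2d$ (its leading coefficient is $\ell^2 + 4\ell + 5$ when $d=1$, which is nonzero for every $\ell \in \FF_7$ since $3$ is not a square modulo $7$), and $\overline w - 5x \ne 0$ because $\overline w(0) = 1$; hence the left side has degree $\ge 6d > 3d$, contradicting the right side. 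The only remaining possibility, $\overline w = 1$, fails directly since the left side then equals $1 + 3x^2 + \cdots$. Therefore $\overline w$, and hence $\overline y = \overline w/x$, has infinitely many nonzero coefficients, i.e. $7 \nmid r_n$ for infinitely many $n$, which completes the argument.

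The main obstacle is not the concluding degree count, which is elementary, but arranging matters so that a reduction modulo $7$ is both meaningful and nondegenerate: one must rescale by the correct power of $u$ to obtain $7$-integral coefficients, verify that integrality genuinely holds for all $n$ (via the unit $\partial_w\Phi(1,0) = 4$) rather than merely reading off the finitely many entries of Table~\ref{t:hauptmodul}, and confirm that the reduced functional equation does not collapse to something compatible with a polynomial solution. The prime $2$ in $\partial_w\Phi(1,0) = 4$ is harmless because it is invisible to $v_7$, which is also why the same strategy should transpose verbatim to a prime over $7$ in the quadratic cases $U_1$ and $U_6$.
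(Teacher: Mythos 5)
Your proof is correct and follows essentially the same route as the paper: rescale $z$ and $q_4$ by $u$, obtain $7$-integrality of the normalized coefficients $r_n$ via Hensel's lemma, reduce the $j$-equation modulo $7$, and rule out a solution with finitely many nonzero coefficients by a degree count. You are in fact more careful than the paper at the integrality step: since $\partial_w\Phi(1,0)=\pm 4$, Hensel directly yields only $r_n\in\ZZ_{(7)}$ (the paper asserts integrality outright, which naive Hensel does not give at the prime $2$), and $\ZZ_{(7)}$ is exactly what the mod-$7$ reduction needs.

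One caveat, which originates in the paper rather than in your reasoning: the printed value $c_3=10869u^2$ is a transposition typo for $10896u^2$. Indeed, the paper's own substitution formula $c_3=2c_2^2-2c_2c_5+\tfrac{1}{3}c_5^2-c_2c_8+\tfrac{2}{3}c_5c_8+\tfrac{2}{3}c_6$ evaluated at the printed solution gives $131072-81920-47104-19312+\tfrac{1}{3}(25600+58880)=10896$ (in units of $u^2$), and the expanded degree-$7$ polynomial in the paper's proof is consistent with $10896$: its $\hat z^5$-coefficient is $3(256)^2+3c-264\cdot 768$, which equals the printed $26544$ for $c=10896$ but $26463$ for $c=10869$. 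Consequently your reduced equation is not the true one: since $10896\equiv 4\pmod 7$, the quadratic factor reduces to $(\overline{w}+2x)^2$, not to a quadratic with nonsquare discriminant, and the correct reduction is $(\overline{w}+2x)^6(\overline{w}-5x)=(\overline{w}+2x)^7=\overline{w}^7+2x^7=\overline{w}^3$, i.e.\ $\bar{y}^{\,7}+2=\bar{\jmath}\,\bar{y}^{\,3}$ after dividing by $x^7$ --- which is the paper's $x^7+\hat{\jmath}x^3+2$ up to the (immaterial) sign of $\hat{\jmath}$; the paper's signs contain slips anyway, since $u^{-4}=-7^7$. Your nonsquare-discriminant observation therefore applies to the wrong quadratic, but your argument survives with a one-line adjustment and even simplifies: by Frobenius, a polynomial $\overline{w}$ of degree $d\ge 2$ makes the left side $\overline{w}^7+2x^7$ have degree $7d>3d$; for $d=1$, $\overline{w}=1+\ell x$ gives $\overline{w}^7+2x^7=1+(\ell+2)x^7$, of degree $7$ or $0$, never $3$; and $\overline{w}=1$ fails since $1+2x^7\ne 1$. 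So your conclusion --- $7\nmid r_n$ infinitely often, hence $v_\pi(a_n)=-7(n+1)$ along a subsequence --- stands, and the degree-count scheme you chose is robust to exactly the kind of coefficient perturbation that occurred here.
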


\begin{proof}
We note the minimal polynomial for the hauptmodul, $z$, as it defines a finite extension of $\Qbar(j)$ is
\[ (z^2+c_2z + c_3)^3(z+c_4) - j(z+c_1)^3. \]
We now perform the changes of variables
\[   z = u \hat{z} \]
 where $u = \sqrt[4]{-7}/7^2$, as above. We see that $\hat{z}$ satisfies
 \begin{align*} 
   \hat{z}^7 + 504\hat{z}^6 +& 26544\hat{z}^5 - 27020672\hat{z}^4 - 6349147392\hat{z}^3\\&
       - 568400910336\hat{z}^2 - 22777684586496\hat{z} - 341511404027904\\
   &  - 7^7j(\hat{z}^3 + 504\hat{z}^2 + 84672\hat{z} + 4741632).  \end{align*}
 To better understand the denominators in the Laurent expansion we shall formally substitute
 \[  q_4 =  u^{-1} \hat{q} \]
 and study $\hat{z}$ as a Laurent series in $\hat{q}$.
  Additionally, we renormalize $j = -\hat{j}/7^7$ so that $\hat{j} = \frac{1}{\hat{q}^4} \pmod{7}$.
  We then have
 \begin{align*}   \hat{z}^7 + 504\hat{z}^6 +& 26544\hat{z}^5 - 27020672\hat{z}^4 - 6349147392\hat{z}^3\\&
                - 568400910336\hat{z}^2 - 22777684586496\hat{z} - 341511404027904\\&
                 + \hat{j}(\hat{z}^3 + 504\hat{z}^2 + 84672\hat{z} + 4741632). 
                \end{align*}
  By an application of Hensel's lemma we can conclude that the coefficients of $\hat{z}$, as a Laurent series in $\hat{q}$, are integers.
 
 We may thus reduce the series $\hat{z}$ modulo $7$ and notice that the result satisfies the minimal polynomial 
 \[ x^7 + \hat{j}x^3 + 2 \pmod{7}\]
 over the function field $\FF_7(\hat{j})$. Solutions to this equation in Laurent series $\lseries{\FF_7}{\hat{q}}$ must have infinitely many non-zero coefficients. Indeed, a non-constant polynomial cannot satisfy a polynomial equation of degree greater than $0$.
 
 This implies that $\hat{z}$, and hence $z$, has unbounded denominators as a Laurent series when expressed in the variable $q_4$.
\end{proof}

\begin{cor}
  \label{c:ubd}
Any element of $\Qbar(z)$ not in $\Qbar(j)$ has unbounded denominators.
\end{cor}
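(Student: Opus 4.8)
The plan is to prove the contrapositive: if $g\in\Qbar(z)$ has bounded denominators, then $g\in\Qbar(j)$. First I would exploit that $[\Qbar(z):\Qbar(j)]=7$ is prime: if $g\notin\Qbar(j)$ then $\Qbar(j)(g)=\Qbar(z)$, and in particular $g=\rho(\hat z)$ for some rational function $\rho\in\Qbar(x)$, where $\hat z=z/u$ is the integral normalization from the proof of the Lemma. The crucial dictionary, read off from that proof, is that since $v_{\frakp}(u)<0$ (with $\frakp$ the relevant prime over $7$, in a number field enlarged to contain the coefficients of $g$), a Laurent series in $q_4$ has bounded denominators if and only if, after clearing a fixed denominator, its expansion in $\hat q=uq_4$ is $\frakp$-integral with reduction a \emph{Laurent polynomial} in $k((\hat q))$ (here $k$ is the residue field at $\frakp$, enlarged if necessary). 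So the whole point is to show that a bounded $g$ is forced to be $T$-invariant, which will give $g\in\Qbar(j)$.

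Next I would run a reduction argument modulo $\frakp$. From the Lemma's proof, $\bar{\hat z}$ is transcendental over $\FF_7$ and, via $\hat j=-(\bar{\hat z}^{7}+2)/\bar{\hat z}^{3}$, generates the degree-$7$ rational function field $L_k=k(\bar{\hat z})$ over $k(\hat j)=k(\hat q^{4})$; one should first check that $x^{7}+\hat j x^{3}+2$ remains irreducible over $k(\hat j)$, which holds because the geometric constant field cannot be $\FF_{7^7}$ (else $\bar{\hat z}\in\FF_{7^7}(\hat j)$ would be a rational function of $\hat q^{-4}$, contradicting its pole of order one). Writing $\rho=P/Q$ with $P,Q$ scaled to $\frakp$-integral content one and reducing the identity $g\,Q(\hat z)=P(\hat z)$ is non-degenerate precisely because $\bar{\hat z}$ is transcendental, so $\bar Q(\bar{\hat z})\neq0$ and hence $\bar g\in k(\bar{\hat z})$. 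Assuming $g$ bounded, $\bar g$ is a Laurent polynomial, so $\bar g\in k(\hat q)$ as well; since $\gcd(4,7)=1$ and $7$ is prime, $k(\bar{\hat z})\cap k(\hat q)=k(\hat j)$, whence $\bar g\in k(\hat j)=k(\hat q^{4})$. In other words, the reduction of $g$ is supported on exponents divisible by $4$.

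Finally I would upgrade this congruence to an exact statement by a $\frakp$-adic descent. Choosing a Laurent polynomial in $\hat j$ with $\frakp$-integral coefficients reducing to $\bar g$, subtracting it from $g$, and dividing by a uniformizer produces $g_1\in\Qbar(z)\setminus\Qbar(j)$ which is again bounded (the bounding constant grows by one) and again $\frakp$-integral in $\hat q$; iterating and checking coefficientwise convergence shows that every $\hat q$-coefficient of $g$ at an exponent not divisible by $4$ vanishes. Thus $g$ is a power series in $q_4^{4}=q$, so $g(\tau+1)=g(\tau)$ and $g$ is invariant under both $G$ and $T$. Since $T\notin G$ and $[\Gamma:G]=7$ is prime, $\langle G,T\rangle=\Gamma$, so $g\in\Qbar(j)$, contradicting $g\notin\Qbar(j)$.

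The step I expect to be the main obstacle is the descent: one must verify that boundedness in $q_4$ is preserved at each stage (so that the field-theoretic input $\bar g_\ell\in k(\hat j)$ can be re-applied), that the successive lifts can be taken $\frakp$-integral in $\hat q$, and that the resulting series converges $\frakp$-adically to $g$ coefficient by coefficient. The clean closing observation — that support on $4\ZZ$ is equivalent to $T$-invariance and hence, by primality of the index, to $\Gamma$-invariance — is what makes the argument terminate.
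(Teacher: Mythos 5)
Your argument is correct in substance, but it takes a genuinely different and far heavier route than the paper's. The paper uses the primality of $[\Qbar(z):\Qbar(j)]=7$ once and then finishes in three lines: any $f\in\Qbar(z)\setminus\Qbar(j)$ generates the whole extension, so $z=P(f)$ with $P\in\Qbar(j)[x]$; clearing denominators gives $zR_1(j)=P_2(f)$ with $R_1\in\Qbar[j]$ and $P_2$ having coefficients in $\Qbar[j]$, and since series with bounded denominators form a ring containing $j$, boundedness of $f$ would force boundedness of the right-hand side, contradicting the Lemma via the left-hand side. You instead prove the contrapositive by re-running the mod-$\frakp$ reduction from the Lemma's proof and appending a $\frakp$-adic successive approximation, arriving at the stronger structural statement that any $\frakp$-bounded element of $\Qbar(z)$ has $\hat q$-support in $4\ZZ$, hence is $T$-invariant and lies in $\Qbar(j)$ by primality of the index. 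Notably, you use only the \emph{ingredients} of the Lemma ($\frakp$-integrality of $\hat z$ and its mod-$7$ equation), not its conclusion, so your corollary subsumes the Lemma as the case $g=z$; it also localizes the unboundedness at the specific prime $\frakp$, and your intersection argument $k(\bar{\hat z})\cap k(\hat q)=k(\hat j)$ in fact supplies a justification for a step the paper asserts without comment (that multiplying the unbounded $z$ by the bounded $R_1(j)$ cannot produce a bounded series, which is not automatic ultrametrically because of possible cancellation). Two local repairs are needed, neither fatal. First, your ``dictionary'' is false as an equivalence: $\frakp$-integrality in $\hat q$ with Laurent-polynomial reduction does \emph{not} imply bounded denominators in $q_4$ --- take $q_4$-coefficients $a_n=7u^n$, whose $\hat q$-coefficients are constantly $7$ and reduce to $0$, while $v_\frakp(a_n)\to-\infty$; you only ever invoke the true forward direction (boundedness forces Laurent-polynomial reduction, since $v_\frakp(a_nu^{-n})\geq -C+\tfrac{7n}{4}$), so state only that. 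Second, your constant-field justification of the irreducibility of $x^7+\hat jx^3+2$ over $k(\hat j)$ is garbled; the clean argument is that $\bar{\hat z}$ is transcendental over $k$ (it has valuation $-1$ in $k((\hat q))$, whereas elements algebraic over $k$ have valuation $0$), and $\hat j=-(\bar{\hat z}^7+2)/\bar{\hat z}^3$ has coprime numerator and denominator of degrees $7$ and $3$, so $[k(\bar{\hat z}):k(\hat j)]=\max(7,3)=7$ over any $k\supseteq\FF_7$, making the displayed degree-$7$ polynomial minimal. With these restated, your flagged descent does close: boundedness (with constant growing by one) and $\frakp$-integrality in $\hat q$ propagate through $g_{\ell+1}=(g_\ell-h_\ell(\hat j))/\pi$ because $h_\ell(\hat j)\in\Qbar[j,j^{-1}]$ is bounded with $\hat q$-support in $4\ZZ$, and the coefficientwise conclusion only requires, for each fixed exponent not divisible by $4$, divisibility by $\pi^{\ell}$ for every $\ell$.
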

\begin{proof}
If $f\in \Qbar(z)\setminus \Qbar(j)$ then the field extension it generates satisfies
\[ \Qbar(f)/ \Qbar(j) =  \Qbar(z)/ \Qbar(j) \]
and hence it follows that $z$ can be expressed as $P(f)$ where $P$ is a polynomial in $\Qbar(j)$.
By clearing denominators from the coefficients of $P$ we can write
\[ z R_1(j) = P_2(f) \]
where now $R_1\in \Qbar[j]$ and $P_2$ has coefficients in $ \Qbar[j]$.
As the left hand side, $z R_1(j)$, has unbounded denominators, so too must $f$.
\end{proof}

\begin{proof}[Completion of proof of Theorem \ref{t:main} for $G_1$]
 Every form  $f \in M_k(G)\setminus M_k(\Gamma)$ of weight at least $4$ with algebraic Fourier coefficients can be expressed as $f =E_k P(z)$ for $P(z) \in \Qbar(z)\setminus \Qbar(j)$. By Corollary \ref{c:ubd} $P(z)$ has unbounded denominators, and hence so does $f$. If $f \in M_2(G) \setminus M_2(\Gamma)$ then instead write $f = (E_6/E_4)P(z)$ for $P(z) \in \Qbar(z) \setminus \Qbar(j)$ and then the same argument applies.
\end{proof}

\section{Some details for the other groups}
\label{s:other}

Since the data about elliptic points for all the index $7$ subgroups agree, the degrees of $a_1$, $f_3$, $e_3$, $f_2$ and $e_2$ are the same as for the group $G_1$. Therefore in each case below we retain our notation from Section \ref{s:details} for these polynomials in terms of unknowns $c_1$ through $c_8$. For all but $H_1$ we were able to solve the equations through a mixture of saturating with respect to the ASD nonequalities for the $j$-equations, as well as using Grobner bases. Unfortunately $H_1$ does not meet the locus defined by the nonequalities and this strategy did not help there. For $H_1$ we instead performed a sequence of projections all the way down to the variable $c_8$ and then we were able to decompose the ideal. This yielded two additional spurious components, as well as the unique correct solution below.

Let $\psi \colon \Gamma \to \Gamma$ be the outer automorphism given by conjugation with $\stwomat{-1}001$, so that $\psi\stwomat abcd = \stwomat a{-b}{-c}d$. Notice that $\psi(T) = T^{-1}$ and $\psi(S) = S^{-1}$, and that $\psi$ maps congruence subgroups to congruence subgroups. Since the $j$-equations do not distinguish between complex conjugate subgroups, we shall describe how complex conjugation acts on our groups.
\begin{lem}
  \label{l:outer}
  The outer automorphism $\psi$ permutes the $G_j$ among themselves, and likewise for the $H_j$. In both cases the action is given by the permutation $(12)(36)(45)$. On the other hand, $\psi$ maps the $U_j$ groups to the $V_j$ groups as follows:
  \begin{align*}
    \psi(U_1) &= V_2, &\psi(U_2) &= V_1, &\psi(U_3) &= V_4, &\psi(U_4) &= V_3, \\
    \psi(U_5) &= V_5, &\psi(U_6) &= V_6, &\psi(U_7) &= V_7.
  \end{align*}
\end{lem}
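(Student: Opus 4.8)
The plan is to reduce the entire statement to a bookkeeping computation with the permutation representations $\phi_j$ of Table~\ref{t:homs}. The key point is functorial: since $\psi$ is an automorphism of $\Gamma$, for any transitive $\phi\colon\Gamma\to S_7$ and any point $j$ one has, directly from the definitions,
\[
  \psi\bigl(\phi^{-1}(\Stab_{\im\phi}(j))\bigr)=(\phi\circ\psi^{-1})^{-1}(\Stab_{\im\phi}(j)),
\]
and because $\psi$ is conjugation by the involution $\stwomat{-1}001$ we have $\psi^{-1}=\psi$. Thus determining the action of $\psi$ amounts to identifying each $\phi_j\circ\psi$ up to conjugacy in $S_7$. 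If $\phi_j\circ\psi=\tau\,\phi_{j'}\,\tau^{-1}$ for some $\tau\in S_7$, then using $\tau^{-1}\Stab_{\im}(j)\tau=\Stab_{\im}(\tau^{-1}(j))$ the subgroup above becomes the $\tau^{-1}(j)$-th point stabilizer of $\phi_{j'}$; so $\psi$ carries the $j$-th group in the family of $\phi_j$ to the $\tau^{-1}(j)$-th group in the family of $\phi_{j'}$. In every case below the conjugator $\tau$ I find is an involution, so $\tau^{-1}=\tau$.

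To make this explicit I would first rewrite $\psi$ on the free generators $S$ (order $2$) and $R$ (order $3$). From $\psi(S)=S^{-1}=S$ and $\psi(T)=T^{-1}$, combined with $T=S^{-1}R$, one gets $\psi(R)=\psi(S)\psi(T)=S\,T^{-1}=S\,R^{-1}S$. Consequently, for each $\phi$ we have $(\phi\circ\psi)(S)=\phi(S)$ and $(\phi\circ\psi)(R)=\phi(S)\,\phi(R)^{-1}\,\phi(S)$; that is, $\phi\circ\psi$ fixes the image of $S$ and replaces the image of $R$ by the $\phi(S)$-conjugate of its inverse. Both are immediate to evaluate from Table~\ref{t:homs}.

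I would then run the four cases. For $\phi_1$ and $\phi_2$ the cycle type of $\phi(T)$ — namely $(4,3)$ and $(5,2)$ — is preserved under $\psi$, since $\phi(T)\mapsto\phi(T)^{-1}$, so each family is mapped into itself; a short search then produces in both cases the conjugator $\tau=(12)(36)(45)$, giving exactly the stated permutation $(12)(36)(45)$ on the $G_j$ and on the $H_j$. For $\phi_3$ I would compute $(\phi_3\circ\psi)(S)=(12)(34)(67)$ and $(\phi_3\circ\psi)(R)=(154)(367)$, and then exhibit $\tau=(12)(34)$ with $\phi_3\circ\psi=\tau\,\phi_4\,\tau^{-1}$; this simultaneously shows that $\psi$ sends the $U$-family into the $V$-family and, via $j\mapsto\tau^{-1}(j)=(12)(34)(j)$, reproduces the table in the Lemma. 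The identities $\psi(V_j)=U_{\dots}$ then follow formally from $\psi^2=\id$.

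The one genuinely delicate point is the $U/V$ case. For $\phi_1,\phi_2$ the family is pinned down by the easily-read $T$-cycle type, but $\phi_3(T)$ and $\phi_4(T)$ are both $6$-cycles, so that invariant cannot detect that $\psi$ \emph{interchanges} the two families rather than preserving each. The substantive step is therefore to confirm that $\phi_3\circ\psi$ is conjugate to $\phi_4$ and not to $\phi_3$, which I would establish not by any coarse invariant but by directly producing the intertwining permutation $\tau=(12)(34)$ and verifying the two generator relations on $S$ and $R$.
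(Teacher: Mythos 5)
Your proof is correct, and I have checked the computations: with $\psi(S)=S$ and $\psi(R)=SR^{-1}S$ one gets $(\phi_1\circ\psi)(R)=(164)(375)$, $(\phi_2\circ\psi)(R)=(164)(357)$ and $(\phi_3\circ\psi)(R)=(154)(367)$, and your conjugators $\tau=(12)(36)(45)$ (intertwining $\phi_1\circ\psi$ with $\phi_1$ and $\phi_2\circ\psi$ with $\phi_2$) and $\tau=(12)(34)$ (intertwining $\phi_3\circ\psi$ with $\phi_4$) do verify both generator relations; since $\Gamma$ is generated by $S$ and $R$, checking those two images suffices, and the bookkeeping $\psi\bigl(\phi^{-1}(\mathrm{Stab}(j))\bigr)=(\phi\circ\psi)^{-1}(\mathrm{Stab}(j))$ together with $\tau^{-1}\mathrm{Stab}(j)\tau=\mathrm{Stab}(\tau^{-1}(j))$ reproduces exactly the permutations stated in the Lemma. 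However, your route is genuinely different from the paper's. The paper's proof is a brute-force machine verification at the level of the subgroups themselves: using the presentations $G_1=\langle T^4,E_1,R\rangle$, etc., it conjugates the generators of each of the $28$ subgroups by $\stwomat{-1}{0}{0}{1}$ and tests, via the word-problem membership algorithm through the $\phi_j$ (the same machinery used later to compute $\chi(c,d)$), which index-$7$ subgroup contains the images; containment plus equality of the finite indices then forces equality. Your proof instead works one level up, with the homomorphisms: a single intertwining permutation per $\phi_j$ determines the action on all seven point stabilizers at once. What each buys: the paper's check requires no cleverness and reuses existing code, but it is opaque and must be run group by group; yours is shorter, hand-verifiable, and isolates the one substantive point --- that $\psi$ interchanges the $\phi_3$- and $\phi_4$-families, which the coarse invariant given by the cycle type of the image of $T$ cannot detect since both are $6$-cycles --- by explicitly exhibiting $\phi_3\circ\psi=\tau\,\phi_4\,\tau^{-1}$ with $\tau=(12)(34)$, rather than leaving that fact buried inside a computer run. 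One small remark: the labeling $\psi(U_j)=V_{\tau^{-1}(j)}$ is unambiguous for any valid choice of intertwiner because the $V_i$ are pairwise distinct, which the paper establishes earlier, so no uniqueness claim about $\tau$ is needed.
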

\begin{proof}
This can be proved using the presentations for each of these groups. Conjugate the generators of each subgroup and test what group contains the result. This is a finite computation that is easily performed on a computer.
\end{proof}
What this lemma means in practical terms is that we only need to solve the $j$-equations for $U_1$ and $U_6$, as the Galois orbits of these solutions will contain the hauptmoduls for all of the $U_j$ and $V_j$.

The details in establishing unbounded denominators are the same as for $G_1$, save that certain constants change, and so we shall omit them.

\subsection{The conjugates of $G_3$}
In this case the $j$-equations read
\begin{align*}
  ja_1^4 &= f_3^3e_3,\\
  (j-1728)a_1^4 &= f_2^2e_2.
\end{align*}
If $u = \sqrt[3]{-2/7}/7^2$, then the solution to the $j$-equations is:
\begin{align*}
  c_1 &= -462u, &c_2 &= -444u,\\
  c_3 &= -148284u^2, &c_4 &=-516u,\\
  c_5 &=-1422u, & c_6 &=822204u^2,\\
  c_7 &= -185029704 u^3, & c_8 &=996u.
\end{align*}
The first few terms of the Fourier expansion of the hauptmodul are given in Tables \ref{tab:hauptG3}.

\begin{table}[t]
  \begin{tabular}{|c|c|}
    \hline
    $n$ & $a_n/u^{n+1}$\\
    \hline
    -1 &$1 $\\
    0 & $0$ \\
1&$148932$\\
2&$-71333864/2$\\
3&$14784602112/2$\\
4&$-2720037481056/2$\\
5&$926140535244764/2^2$\\
6&$ -147594381291749376/2^2$\\
7&$ 22341564325891713168/2^2$\\
8&$ -6482694981105850075968/2^3$\\
9&$ 907550467150406926565376/2^3$\\
10&$ -123344662799290912907945472/2^3$\\
11&$ 32655462531659638680360877638/2^4$\\
\hline
\end{tabular}
\caption{Normalized Fourier coefficients of the hauptmodul for $G_3$}\label{tab:hauptG3}
\end{table}

\begin{rmk}
Note that the apparent powers of $2$ in the denominators are not necessary in most of the terms displayed.
Though if the table were extended with the same pattern there would be infinitely many terms where the numerator is odd.
However, these powers of $2$ cancel with those from the power of $u$ and are not unbounded denominators in the actual $q$-expansion.
\end{rmk}

\subsection{The conjugates of  $H_1$}
In this case the $j$-equations read
\begin{align*}
  ja_1^2 &= f_3^3e_3,\\
  (j-1728)a_1^2 &= f_2^2e_2.
\end{align*}
If $u = \sqrt[5]{-7^3}/7^2$, then the solution to the $j$-equations is:
\begin{align*}
  c_1 &= 28u, &c_2 &= 51u,\\
  c_3 &= -636u^2, &c_4 &=-97u,\\
  c_5 &=-18u, & c_6 &= -2979u^2,\\
  c_7 &= -111348u^3, & c_8 &=92u.
\end{align*}
The first few terms of the Fourier expansion of the hauptmodul are given in Table \ref{tab:hauptH1}.
\begin{table}[b]
  \begin{tabular}{|c|c|}
    \hline
    $n$ & $a_n/u^{n+1}$\\
    \hline
    -1 & $1 $\\
    0 & $0$ \\
1&$    1946$\\
2&$17780$\\
3&$813295$\\
4&$-20472508$\\
5&$-194969600$\\
6&$-21590535732$\\
7&$-86533770365$\\
8&$-5540827925500$\\
9&$121544077700080$\\
10&$954435095756800$\\
11&$97227702559110739$\\
    \hline
\end{tabular}
\caption{Normalized Fourier coefficients of the hauptmodul for $H_1$}\label{tab:hauptH1}
\end{table}

\subsection{The conjugates of $H_3$}
In this case the $j$-equations read
\begin{align*}
  ja_1^5 &= f_3^3e_3,\\
  (j-1728)a_1^5 &= f_2^2e_2.
\end{align*}
If $u = \sqrt{-7}/7^4$, then the solution to the $j$-equations is:
\begin{align*}
  c_1 &= -952u, &c_2 &= 96u,\\
  c_3 &= -205797696u^2, &c_4 &=-5048u,\\
  c_5 &= -5904u, & c_6 &=426314304u^2,\\
  c_7 &= -2498515200000u^3, & c_8 &=7048u.
\end{align*}
The first few terms of the Fourier expansion of the hauptmodul are given in Table \ref{tab:hauptH3}.

\begin{table}
  \begin{tabular}{|c|c|}
    \hline
    $n$ & $a_n/u^{n+1}$\\
    \hline
    -1 &$ 1$ \\
    0 &$ 0$ \\
1&$    7583156$\\
2&$-8915200000$\\
3&$25855539541090$\\
4&$-38753899878400000$\\
5&$59853295754680171800$\\
6&$-107814623754600729600000$\\
7&$130691527974826975392903135$\\
8&$-229196454200112641389772800000$\\
9&$294346563065808045129145192319236$\\
10&$-427644716636763893188085418688000000$\\
11&$606586125578466006634487839969153168734$\\
    \hline
  \end{tabular}
  \caption{Normalized Fourier coefficients of the hauptmodul for $H_3$}\label{tab:hauptH3}
\end{table}

\subsection{The conjugates of $U_1$}
In this case the $j$-equations read
\begin{align*}
  ja_1 &= f_3^3e_3,\\
  (j-1728)a_1 &= f_2^2e_2.
\end{align*}
%
%
%
Let $\zeta_3$ denote a third root of unity and set
\[ u =  \left((1763\zeta_3 + 1255)2^2 3/7^7\right)^{(1/6)}. \]
Note that the minimal polynomial of $u$ over $\QQ$ is $823543X^{12}-8964X^6+432$. The solution to  the $j$-equations is then:
\begin{align*}
  c_1 &=(-8\zeta_3 - 10)u , &c_2 &= (-6\zeta_3 - 6)u,\\
  c_3 &= (-28\zeta_3 - 20)u^2, &c_4 &= (10\zeta_3 + 8)u,\\
  c_5 &= (-4\zeta_3 - 8)u, & c_6 &=(-60\zeta_3 + 12)u^2,\\
  c_7 &= (60\zeta_3 - 276)u^3, & c_8 &= 6u.
\end{align*}
The first few terms of the Fourier expansion of the hauptmodul are given in Table \ref{tab:hauptU1}. 
\begin{rmk}
In $\QQ(\sqrt{-3})$ the element $(1763\zeta_3 +1255)2^2 3/7^7$ has a non-trivial valuation at
$2$ of $2$, at $3$ of $3$ and at one of the two primes dividing $7$ of $-7$. The valuation is $0$ at all other primes of $\QQ(\sqrt{-3})$.
Consequently, the denominators of $u$ are at exactly one of the two primes over $7$.

The $2$'s and $3$'s appearing in the denominators are cancelled by those appearing in the numerator in the power of $u$.
\end{rmk}

\begin{table}
  \begin{tabular}{|c|c|}
    \hline
    $n$ & $a_n/u^{n+1}$\\
    \hline
    -1 &$ 1$ \\
    0 &$ 0$ \\
 1 & $20\zeta_3 + 4$\\
 2 & $60\zeta_3 + 12$\\
 3 & $-96\zeta_3 + 48$\\
 4 & $432\zeta_3 + 288$\\
 5 & $-(3893/9)\zeta_3 - 1060/9$\\
 6 & $576\zeta_3 - 576$\\
 7 & $(13952/3)\zeta_3 + 7372$\\
 8 & $7168\zeta_3 + 18312$\\
 9& $-45200\zeta_3 - 33568$\\
 10 & $-4160\zeta_3 + 93248$\\
 11 & $-(3412747/72)\zeta_3 - 22548985/216$\\
    \hline
  \end{tabular}
  \caption{Normalized Fourier coefficients of the hauptmodul for $U_1$}\label{tab:hauptU1}
\end{table}

\subsection{The conjugates of $U_6$}
In this case the $j$-equations read
\begin{align*}
  ja_1^6 &= f_3^3e_3,\\
  (j-1728)a_1^6 &= f_2^2e_2.
\end{align*}
As above let $\zeta_3$ denote a third root of unity and set
\[ u = \left((3\zeta_3+1)/7\right)^7. \]
The solution to the $j$-equations is:
\begin{align*}
  c_1 &=(-1368\zeta_3 - 4944)u , &c_2 &=(59472\zeta_3 + 238944)u,\\
  c_3 &=(738742464\zeta_3 + 1457337024)u^2, &c_4 &= (-1368\zeta_3 + 1968)u,\\
  c_5 &= (-128520\zeta_3 - 512496)u, & c_6 &=(-5453272512\zeta_3 - 13411016640)u^2,\\
  c_7 &=(-8345692154880\zeta_3 - 38174900673024)u^3, & c_8 &= (3816\zeta_3 + 5424)u.
\end{align*}
The first few terms of the Fourier expansion of the hauptmodul are given in Table \ref{tab:hauptU6}.
\begin{rmk}
The only prime of $\QQ(\sqrt{-3})$ at which $u$ has non-trivial valuation is one of the two primes over $7$.
It is has valuation $-7$ at this prime.
\end{rmk}

\begin{table}
  \begin{tabular}{|c|c|}
    \hline
    $n$ & $a_n/u^{n+1}$\\
    \hline
    -1 &$ 1$ \\
    0 &$ 0$ \\
1& $3195612\zeta_3 + 4653180$\\
2& $2113007616\zeta_3 + 7901431808$\\
3& $-5777884753902\zeta_3 - 11584189398816$\\
4& $3171254057975808\zeta_3 + 3027156411138048$\\
5& $-20391915647836108224\zeta_3 - 3800819906733485320$\\
6& $15478255418070783762432\zeta_3 - 10803276590128984571904$\\
7& $26591161128955478844327729\zeta_3 + 24908794926096718823786001$\\
8& $-26181911558676353382430801920\zeta_3 - 12727797977727574691751002112$\\
9& $26604087748477982557834447865556\zeta_3 - 15929436789742692451659751424160$\\
    \hline
  \end{tabular}
  \caption{Normalized Fourier coefficients of the hauptmodul for $U_6$}\label{tab:hauptU6}
\end{table}

\section{Some results on Eisenstein series}
\label{s:eisenstein}

One of our original aims was to see how much one could say about the Eisenstein series associated to these minimal noncongruence subgroups, but as pointed out by Philips-Sarnak \cite{PhilipsSarnak}, it is difficult if not impossible to say too much about their Fourier coefficients in general. See also \cite{Scholl1}, \cite{Scholl2} and \cite{MurtyRamakrishnan} where it is observed that even the \emph{algebraicity} of Fourier coefficients of Eisenstein series can be a thorny question. For example, by the main theorem of \cite{Scholl1}, the holomorphic Eisenstein series of weight $2$ associated to the noncongruence subgroup discussed in \cite{BKP} has infinitely many transcendental Fourier coefficients.

\begin{rmk}
  Note that by \cite{Scholl1}, the Eisenstein series of weight $2$ for the noncongruence subgroups of index $7$ have algebraic Fourier coefficients, since the Manin-Drinfeld condition is trivially satisfied in these cases: the Picard group of degree zero divisor classes on the compactified curve associated to each of these groups is cyclic of order $12$. More simply, algebraicity follows in weight $2$ because the space of forms of weight $2$ is one-dimensional in each case.
\end{rmk}

We begin by recalling a standard computation for the Fourier coefficients of Eisenstein series on any subgroup of $\Gamma$ of finite index; see \cite{Kubota} for more details. For simplicity in this section we focus solely on the group $G_1$. For even integers $k\geq 4$ define
\[
 g_k(\tau) \df g_{k}^{(\infty)}(\tau) = \sum_{\langle \pm T^4\rangle \backslash G_1} \frac{1}{(c\tau+d)^k},
\]
which converges absolutely for $k \geq 4$. Observe that elements of $\langle \pm T^4\rangle \backslash G_1$ are in one-to-one correspondence with the equivalence classes of elements in $G_1$ with the same bottom row (up to sign). Therefore we define
\[
  \chi(c,d) = \begin{cases}
    1 & \exists \stwomat **cd \in G_1,\\
    0 & \textrm{otherwise,} 
  \end{cases}
\]
and we find that
\[
  g_{k}^{(\infty)}(\tau) = 1+\sum_{c=1}^{\infty}\sum_{d=-\infty}^\infty \frac{\chi(c,d)}{(c\tau+d)^k}.
\]
\begin{rmk}
The above defines the Eisenstein series for the cusp at infinity, $g_k^{(\infty)}$. We can analogously define an Eisenstein series, $g_k^{(1)}$, for the cusp at one. It is elementary to verify that $g_k^{(\infty)} + g_k^{(1)} = E_k$. As such in what follows we will not typically consider $g_{k}^{(1)}$ and so we define $g_k = g_k^{(\infty)}$ for $k \ge 4$. For $k=2$ these series are not holomorphic modular forms and we instead consider $g_2 = g_2^{(\infty)}-\tfrac{4}{3}g_2^{(1)}$. 
\end{rmk}

\begin{prop}
  \label{p:indicator}
  The indicator function $\chi(c,d)$ satisfies the following properties:
  \begin{enumerate}
  \item $\chi(c,d) = \chi(c,d+4c)$ and $\chi(c,d) = \chi(c+4d,d)$;
  \item $\chi(c,d) = \chi(-c,-d)$;
  \item $\chi(c,d) = \chi(d,d-c)$;
  \item $\chi(c,d) = \chi(3c+2d,-5c-3d)$;
  \item $\chi(c,d) = \chi(-c,d-c)$;
  \item $\chi(c,d) = \chi(d,c)$.
  \end{enumerate}
\end{prop}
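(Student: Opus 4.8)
The plan is to read off all six identities from the single observation that $\chi(c,d)=1$ precisely when $(c,d)$ occurs as the bottom row of an element of $G_1$. Working in $\SL_2(\ZZ)$, write $\widetilde{G}_1$ for the preimage of $G_1$, so that $-I\in\widetilde{G}_1$, and note that the bottom row of $\gamma$ is $(0,1)\gamma$. Thus the set $B=\{(c,d):\chi(c,d)=1\}$ equals the single right orbit $(0,1)\widetilde{G}_1$. Since right multiplication by a fixed $g\in\widetilde{G}_1$ permutes $\widetilde{G}_1$ and sends a bottom row $(c,d)$ to $(c,d)g$, we have $Bg=B$, i.e. $\chi\big((c,d)g\big)=\chi(c,d)$ for all $g\in\widetilde{G}_1$. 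I would obtain identities (1) (first equation), (2), (3) and (4) immediately by specializing $g$ to the generators $T^4$, $-I$, $R$ and $E_1$, using $(c,d)T^4=(c,d+4c)$, $(c,d)(-I)=(-c,-d)$, $(c,d)R=(d,d-c)$ and $(c,d)E_1=(3c+2d,-5c-3d)$.

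The essential new content is identity (6), $\chi(c,d)=\chi(d,c)$, and I expect this to be the main obstacle: the coordinate swap is right multiplication by $w=\stwomat0110$, which has determinant $-1$ and so cannot be induced by any element of $\SL_2(\ZZ)$; an orientation-reversing symmetry is genuinely needed. Identity (6) is equivalent to $Bw=B$, and I would reduce this to two claims: (i) $(1,0)\in B$ and (ii) $w\widetilde{G}_1w=\widetilde{G}_1$. Indeed, granting these, $Bw=(0,1)\widetilde{G}_1w=(0,1)w\,(w\widetilde{G}_1w)=(1,0)\widetilde{G}_1$, and because $(1,0)\in B$ lies in the single orbit $B=(0,1)\widetilde{G}_1$ we get $(1,0)\widetilde{G}_1=B$, whence $Bw=B$.

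Claim (i) is quick: $R\in\widetilde{G}_1$ has bottom row $(1,1)$, so $\chi(1,1)=1$, and identity (3) gives $\chi(1,1)=\chi(1,0)$. Claim (ii) is the heart of the matter, and here I would bring in the outer automorphism $\psi$ of the paper. Since $\psi(\gamma)=\diag(1,-1)\,\gamma\,\diag(1,-1)$ and $w=S\,\diag(1,-1)$, conjugation by $w$ is exactly $\gamma\mapsto S\psi(\gamma)S^{-1}$, so $w\widetilde{G}_1w=S\,\psi(\widetilde{G}_1)\,S^{-1}$. By Lemma \ref{l:outer}, $\psi(G_1)=G_2$; and since conjugation by $S\in\Gamma$ carries $G_j$ to $G_{\phi_1(S)(j)}$ while $\phi_1(S)=(12)(34)(56)$ interchanges $1$ and $2$, we get $SG_2S^{-1}=G_1$. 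Hence $w\widetilde{G}_1w=\widetilde{G}_1$, which proves (ii) and therefore (6).

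Finally, the second equation of (1) and identity (5) follow formally by chaining the identities already proved, with no further input. For (1) one writes $\chi(c,d)=\chi(d,c)=\chi(d,c+4d)=\chi(c+4d,d)$ using (6), the first equation of (1), and (6) again. For (5) one writes $\chi(c,d)=\chi(d,c)=\chi(c,c-d)=\chi(-c,d-c)$ using (6), (3) and (2). The whole argument is routine apart from claim (ii), which is where the real work --- the use of the outer automorphism together with the conjugacy of $G_1$ and $G_2$ --- is concentrated.
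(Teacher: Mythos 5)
Your proof is correct, and it rests on the same two pillars as the paper's own argument: right-invariance of the set of bottom rows under generators of $G_1$ (giving the first half of (1) together with (2), (3), (4)), and the outer automorphism $\psi$ via Lemma \ref{l:outer} for the orientation-reversing symmetry. The difference is in which identity you make primary. The paper proves (5) directly, observing that $g\mapsto T^{-1}\psi(g)T$ fixes $G_1$ because $\psi(G_1)=G_2$ and $T^{-1}G_2T=G_1$, and that this map acts on bottom rows as $(c,d)\mapsto(-c,d-c)$; it then deduces (6) formally by the chain $\chi(c,d)=\chi(-c,d-c)=\chi(c,c-d)=\chi(d,c)$. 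You instead prove (6) directly, showing that conjugation by $w=\stwomat{0}{1}{1}{0}$ equals $\gamma\mapsto S\psi(\gamma)S^{-1}$ and preserves $\widetilde{G}_1$ because $\psi(G_1)=G_2$ and $SG_2S^{-1}=G_1$, and then obtain (5) formally from (6), (3) and (2). These are mirror-image computations --- you conjugate $G_2$ back to $G_1$ by $S$ where the paper uses $T$ --- so the essential content (the use of Lemma \ref{l:outer} plus a conjugacy within the $G_j$ family) is identical. Your ordering does buy one small thing: the paper handles the second identity of (1) by asserting that $U^4\in G_1$ for $U=\stwomat{1}{0}{1}{1}$, a membership it does not verify, whereas you derive that identity for free from (6) and the first identity, so your route needs one fewer group-theoretic check. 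Your orbit bookkeeping is also sound where it needs to be: you correctly flag that $w$ has determinant $-1$ and cannot come from right multiplication in $\SL_2(\ZZ)$, you verify $(1,0)\in B$ via $\chi(1,1)=1$ (from $R\in G_1$) and identity (3), and your reduction $Bw=(1,0)\widetilde{G}_1=B$ is complete.
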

\begin{proof}
The first identity in Property (1) follows from the simple observation that
  \[
  \stwomat **cd T^4 = \stwomat**{c}{d+4c} 
\]
and $T^4 \in G_1$. The second follows likewise using $U = \stwomat 1011$, where $U^4 \in G_1$ is the minimal power in $G_1$. Properties (2), (3) and (4) are equivalent with $-1, R, E \in G_1$, respectively. For (5) we can use Lemma \ref{l:outer} and the fact that $T^{-1}G_2T = G_1$, so that $T^{-1}\psi T$ fixes $G_1$. Since $T^{-1}\psi T$ acts on bottom rows as $(c,d) \mapsto (-c,d-c)$, Property (5) follows. Now we can show that Property (6) is a consequence of the other properties:
\[
  \chi(c,d) = \chi(-c,d-c) = \chi(c,c-d) = \chi(d,c). \qedhere
\]
\end{proof}

\begin{rmk}
We make no use of Property (4) stated above, but we include it for completeness, as Properties (1) through (4) in Proposition \ref{p:indicator} encode the action of the generators of $G_1$. Properties (5) and (6) are somewhat less trivial, as they utilize the symmetry of the outer automorphism discussed above.
\end{rmk}

Given Proposition \ref{p:indicator}, we can simplify the expression for $g_k^{(\infty)}$:
\begin{align*}
  g_{k}^{(\infty)}(\tau)& = 1+\sum_{c=1}^{\infty}\sum_{d=-\infty}^\infty \frac{\chi(c,d)}{(c\tau+d)^k}\\
           &=1+\sum_{c=1}^{\infty}\sum_{d=1}^{4c}\sum_{t=-\infty}^\infty \frac{\chi(c,d+4ct)}{(c\tau+d+4ct)^k}\\
           &=1+\sum_{c=1}^{\infty}\frac{1}{(4c)^k}\sum_{d=1}^{4c}\chi(c,d)\sum_{t=-\infty}^\infty \frac{1}{(\frac{c\tau+d}{4c}+t)^k}\\
           &=1+\frac{(2\pi i)^k}{(k-1)!}\sum_{c=1}^{\infty}\frac{1}{(4c)^k}\sum_{d=1}^{4c}\chi(c,d)\sum_{n=1}^\infty n^{k-1}e^{2\pi i n(c\tau+d)/4c}\\
  &= 1+\frac{(2\pi i)^k}{(k-1)!}\sum_{n=1}^{\infty}\sum_{c=1}^{\infty}n^{k-1}\frac{1}{(4c)^k}\left(\sum_{d=1}^{4c}\chi(c,d)e^{2\pi i nd/4c}\right)e^{2\pi i n\tau/4}
\end{align*}
Thus we obtain the Fourier expansion:
\begin{equation}
  \label{eq:fourier}
g_{k}^{(\infty)}(\tau) = 1+\frac{(2\pi i)^k}{4^k(k-1)!}\sum_{n=1}^{\infty}n^{k-1}\left(\sum_{c=1}^{\infty}\left(\sum_{d=1}^{4c}\chi(c,d)e^{2\pi i nd/4c}\right)\frac{1}{c^k}\right)q_4^n.
\end{equation}
In particular, if we define
\begin{align*}
  X(n,c) &\df\sum_{d=1}^{4c}\chi(c,d)e^{2\pi i nd/4c},\\
  D(n,s) &\df \sum_{c=1}^{\infty}\frac{X(n,c)}{c^s},
\end{align*}
then the Fourier coefficients are:
\begin{equation*}
  a_n = \left(\frac{n\pi i}{2}\right)^k\frac{D(n,k)}{n(k-1)!} .
\end{equation*}
Thus, computation of Fourier coefficients is reduced to the evaluation of special values of the Dirichlet series $D(n,s)$. To aid in evaluating such series numerically we provide an algorithm for computing $\chi(c,d)$:
\begin{enumerate}
\item lift $(c,d)$ to a matrix $g=\stwomat abcd \in \Gamma$ using the extended Euclidean algorithm;
\item test if $g$, $Tg$ , $T^2g$ or $T^3g$ is in $G_1$; if so, $\chi(c,d) = 1$ and if not then $\chi(c,d) = 0$.
\item To test if a matrix $h \in \Gamma$ lies in $G_1$, write it as a word in $S$ and $T$, then obtain the analogous word in $\phi_1(S),\phi_1(T) \in S_7$, and check whether the resulting permutation fixes $1$. If so $h \in G_1$, and not otherwise.
\end{enumerate}
See Appendix \ref{a:code} for a \emph{Pari} code implementation.



\begin{rmk}\label{rem:convergence}
To obtain a very rough estimate for $D(n,k)$ computed using the values $c\leq N$, call this approximation $S_N$, observe that
\[
  \abs{D(n,k)-S_N} \leq \sum_{c>N} \frac{4}{c^{k-1}}< 4\int_{N}^\infty \frac{dx}{x^{k-1}} =\frac{4}{(k-2)N^{k-2}}.
\]
In particular, the number of digits of accuracy in this approximation is at least $k-2$ times the number of digits in $N$.
 In practice it appears that $X(n,c) <\!\!< c$, see Figure \ref{f:dirichlet}. This results that is slightly better than this. 
This also leads to the apparent absolute convergence of the series for $k=2$. 
None the less the naive method for evaluating the Fourier coefficients is in general quite inefficient for small $k$.
\end{rmk}

\begin{rmk}
  \label{r:4ctoc}
Observe that 
\begin{align*}
  X(n,c)=& \sum_{d=1}^c\left(\chi(c,d)+i^n\chi(c,d+c)+(-1)^n\chi(c,d+2c)+(-i)^n\chi(c,d+3c)\right)e^{\tfrac{2\pi i nd}{4c}}.
\end{align*}
Evaluating $\chi(c,d)$ involves solving a word problem, and the solution to that word problem can be used to solve the corresponding word problems involved in evaluating $\chi(c,d+c)$, $\chi(c,d+2c)$ and $\chi(c,d+3c)$. In this way one can speed up the evaluation of the approximation to $D(n,k)$ by precomputing values $\chi(c,d)$ four at a time using this optimization.
\end{rmk}

\begin{rmk}
For each group being considered there is a unique (normalized) modular form in weight $2$. A simple computation with the divisors reveals that in every case it will be given precisely by
$g_2 = (E_6\cdot e_3 \cdot f_3)/(E_4\cdot f_2)$.
In Table \ref{tab:fourier} we give the first few Fourier coefficients as a series in respectively $q_4$, $q_3$, $q_5$, and $q_2$.
\begin{table}
\begin{center}
 \resizebox{.88\textwidth}{!}{ \begin{tabular}{|c|c||c|}
  \hline
  $n$ & $G_1$:\quad $a_n/u^n$, $u=\sqrt[4]{-7}/7^2$ & $G_3$:\quad $a_n/u^n$, $u=\sqrt[3]{-2/7}/7^2$ \\
  \hline
$0$&$1$&$1$\\
$1$&$-168$&$462$\\
$2$&$-840$&$-84420$\\
$3$&$733152$&$-807828$\\
$4$&$-1615656$&$-891458736$\\
$5$&$1179184272$&$82305718992$\\
$6$&$-5780133408$&$5155138704870$\\
$7$&$-1097701319232$&$807981764899218$\\
$8$&$20620554819480$&$-57396539567144736$\\
$9$&$-1310614136578824$&$829520378016134700$\\
$10$&$-14959868841286320$&$-368800915551641445600$\\
    \hline
    \hline
 $n$ & $H_1$:\quad $a_n/u^n$, $u=\sqrt[5]{-7^3}/7^2$ & $H_3$:\quad $a_n/u^n$, $u=\sqrt[2]{-7}/7^4$ \\
  \hline
$0$&$1$&$1$\\
$1$&$-28$&$952$\\
$2$&$-3108$&$-14260008$\\
$3$&$88172$&$5950907872$\\
$4$&$824012$&$18866241755032$\\
$5$&$-14260008$&$14858201843068752$\\
$6$&$352362948$&$-29392973490650091168$\\
$7$&$13569079384$&$18769317912571342452672$\\
$8$&$-195382795860$&$26663537479505346618394392$\\
$9$&$-1200557668744$&$12713310504973377181575454552$\\
$10$&$18866241755032$&$-36194240778558471635244990599408$\\
    \hline
  \end{tabular}}
%
\resizebox{.89\textwidth}{!}{
  \begin{tabular}{|c|c|}
      \hline
 $n$ & $U_1$:\quad $a_n/u^n$, $u=\left((1763\zeta_3 + 1255)2^2 3/7^7\right)^{(1/6)}$ \\
 \hline
0&$ 1$\\
1&$ 8\zeta_3 + 10$\\
2&$ 56\zeta_3 + 28$\\
3&$ 84\zeta_3 - 84$\\
4&$ -336$\\
5&$ -1008\zeta_3 - 1008$\\
6&$-710/3\zeta_3 - 184/3$\\
7&$-9566/3\zeta_3 + 9088/3$\\
8&$4256\zeta_3 + 30016/3$\\
9&$ 15624\zeta_3 + 19404$\\
10&$139552/3\zeta_3 - 25984/3$\\
\hline
\hline
 $n$ & $U_6$:\quad $a_n/u^n$, $u= \left((3\zeta_3+1)/7\right)^7$ \\
 \hline
0&$1$\\
1&$1368\zeta_3 + 4944$\\
2 &$5264136\zeta_3 + 13265352$\\
3&$12839470272\zeta_3 + 16044542112$\\
4&$22545390152664\zeta_3 + 27018559576704$\\
5&$9748947084182352\zeta_3 + 9649676839772016$\\
6&$34718972026438197504\zeta_3 + 16480296599809346784$\\
7&$9778372812649484494272\zeta_3 + 9122178274543742453376$\\
8 &$35207674866620513785843560\zeta_3 + 3599167618394097606994536$\\
9 &$35212791025867821428233261296\zeta_3 - 1534671671263749769838754840$\\
10 &$19858438209488318852697458205264\zeta_3 - 8424036363723923387197847067264$\\
\hline
\end{tabular}
}
\end{center}
  \caption{Normalized Fourier coefficients of the forms $g_2$.}\label{tab:fourier}
  
  \end{table}

Assuming for a moment the convergence of $D(n,2)$ then with $g_2^{(\infty)}$ the function formally defined by Equation \eqref{eq:fourier} and $g_2^{(1)}$ the analogous function defined for the cusp $1$ one has that $g_2^{(\infty)}-\tfrac{4}{3}g_2^{(1)}$
will define a holomorphic modular form for $G_1$, and consequently $g_2 = g_2^{(\infty)}-\tfrac{4}{3}g_2^{(1)}$. 
Noting that $g_2^{(\infty)} + g_2^{(1)} = E_2$ this would allow us to immediately deduce that the special value $D(n,2)$ is in fact algebraic. This of course agrees with the expectations from \cite{Scholl1} and is analogous to the Proposition on page 260 of \cite{MurtyRamakrishnan}.
\end{rmk}

Although it is generally nontrivial to determine the field of definition of Eisenstein series, or even if they are algebraic, in this case we can at least determine the phase of the Fourier coefficients:
\begin{prop}
For all $c \geq 1$ we have $X(n,c) \in (\RR\cap \bar \QQ)\cdot \zeta_8^n$, and so $a_n \in \RR\cdot \zeta_8^n$.
\end{prop}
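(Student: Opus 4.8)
The plan is to prove the phase statement by showing that complex conjugation acts on $X(n,c)$ by the scalar $(-i)^n$; that is, $\overline{X(n,c)} = (-i)^n X(n,c)$. This is precisely the assertion that $X(n,c)\zeta_8^{-n}$ is fixed by conjugation: since $(-i)^n = \zeta_8^{-2n}$, one has
\[
\overline{X(n,c)\zeta_8^{-n}} = \overline{X(n,c)}\,\zeta_8^{n} = (-i)^n X(n,c)\,\zeta_8^{n} = \zeta_8^{-2n}X(n,c)\,\zeta_8^{n} = X(n,c)\zeta_8^{-n}.
\]
The engine driving the whole argument is the single symmetry $\chi(c,d) = \chi(c,c-d)$, which I would obtain by combining Properties (5) and (2) of Proposition \ref{p:indicator} as $\chi(c,d) = \chi(-c,d-c) = \chi(c,c-d)$ --- exactly the intermediate identity that already appears in the proof of Property (6).

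With this in hand I would rewrite $X(n,c) = \sum_{d \bmod 4c}\chi(c,d)e^{2\pi i nd/4c}$, using that both $\chi(c,\,\cdot\,)$ and the exponential are $4c$-periodic in $d$ so that the sum runs over any complete residue system modulo $4c$. Taking conjugates and reindexing by the bijection $d \mapsto c-d$ of $\ZZ/4c\ZZ$ yields
\[
\overline{X(n,c)} = \sum_{d \bmod 4c}\chi(c,c-d)\,e^{-2\pi i n(c-d)/4c} = e^{-2\pi i n/4}\sum_{d\bmod 4c}\chi(c,d)\,e^{2\pi i nd/4c} = (-i)^n X(n,c),
\]
where the middle equality uses $\chi(c,c-d)=\chi(c,d)$ and factors out the constant $e^{-2\pi i nc/4c}=e^{-2\pi i n/4}=(-i)^n$. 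This is the desired relation.

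From here the conclusions follow quickly. Since $X(n,c)$ is a finite $\ZZ$-linear combination of roots of unity it is algebraic, and by the displayed relation $X(n,c)\zeta_8^{-n}$ is real, so $X(n,c)\in(\RR\cap\bar\QQ)\zeta_8^{n}$. For the statement about $a_n$, recall that $a_n = (n\pi i/2)^k D(n,k)/(n(k-1)!)$ with $D(n,k)=\sum_{c\ge 1}X(n,c)/c^k$. Each partial sum lies in the line $\RR\zeta_8^{n}\subset\CC$, which is a closed set, so the limit $D(n,k)$ lies there too; and the scalar $(n\pi i/2)^k/(n(k-1)!)$ is real because $k$ is even, whence $i^k=(-1)^{k/2}\in\RR$. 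Therefore $a_n\in\RR\cdot\zeta_8^{n}$.

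The proof is genuinely short, and its only real content is the reindexing step, so the main task is simply to single out the correct symmetry among those collected in Proposition \ref{p:indicator}. The one point to treat with mild care is the weight-two case, where $D(n,2)$ is only barely convergent: here I would emphasize that membership in the closed line $\RR\zeta_8^{n}$ passes to the limit, so the phase conclusion survives without relying on absolute convergence.
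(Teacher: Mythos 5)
Your proof is correct and takes essentially the same route as the paper: both arguments rest on the symmetry $\chi(c,d)=\chi(c,c-d)$ (obtained from Properties (5) and (2) of Proposition \ref{p:indicator}) to establish the key relation $\overline{X(n,c)} = (-i)^n X(n,c)$, from which the phase statements for $X(n,c)$ and $a_n$ follow exactly as you say. Your direct reindexing $d\mapsto c-d$ over a complete residue system modulo $4c$ is merely a cleaner bookkeeping of the paper's computation, which carries out the same conjugation through the four-block decomposition of Remark \ref{r:4ctoc}; the underlying idea is identical.
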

\begin{proof}
Take a complex conjugate of the identity in Remark \ref{r:4ctoc} and use the properties in Proposition \ref{p:indicator} to obtain:
  \begin{align*}
    &\overline{\sum_{d=1}^c\left(\chi(c,d)+i^n\chi(c,d+c)+(-1)^n\chi(c,d+2c)+(-i)^n\chi(c,d+3c)\right)e^{\tfrac{2\pi i nd}{4c}}}\\
    =&\sum_{d=1}^c\left(\chi(c,d)+(-i)^n\chi(c,d+c)+(-1)^n\chi(c,d+2c)+i^n\chi(c,d+3c)\right)e^{\tfrac{2\pi i n(3c+c-d)}{4c}}\\
    =&\sum_{d=1}^c\left(\chi(c,c-d)+(-i)^n\chi(c,2c-d)+(-1)^n\chi(c,3c-d)+i^n\chi(c,4c-d)\right)e^{\tfrac{2\pi i n(3c+d)}{4c}}\\
    =&\sum_{d=1}^c\left(\chi(-c,d+3c)+(-i)^n\chi(-c,d+2c)+(-1)^n\chi(-c,d+c)+i^n\chi(-c,d)\right)(-i)^ne^{\tfrac{2\pi i nd}{4c}}\\
    =&\sum_{d=1}^c\left(\chi(c,d)+(-i)^n\chi(c,d+3c)+(-1)^n\chi(c,d+2c)+i^n\chi(c,d+c)\right)(-i)^ne^{\tfrac{2\pi i nd}{4c}}
  \end{align*}
 which shows that $\overline{X(n,c)} = (-i)^nX(n,c)$. Thus,
  \[
  \overline{X(n,c)(1+i)^{-n}} = (-i)^nX(n,c)(1-i)^{-n} = X(n,c)(1+i)^{-n}.
\]
Hence $X(n,c) \in (\RR \cap \overline{\QQ})\cdot \zeta_8^n$ as claimed. The second claim follows immediately from this.
\end{proof}
The Dirichlet series $D(n,s)$ are quite mysterious, as their coefficients $X(n,s)$ lie in increasingly large number fields, as opposed to more typical Dirichlet $L$-series, or Dedekind $\zeta$-functions, and so many standard techniques cannot be brought to bear on $D(n,s)$. 
It appears that perhaps $X(n,c) = O(c^{5/7})$, and Figure \ref{f:dirichlet} on page \pageref{f:dirichlet} shows a plot of some values that supports this. 
More precisely, when $n=1$, we have $\abs{X(1,c)} < c^{5/7}$ for all $32,769 < c<2,000,000$ (this bound fails for $15$ values below $32,769$). 
Likewise for $n=2,\ldots,11$ with $c<300,000$ and  $n=12,\ldots,50$ with $c<100,000$ the only values with $X(n,c)>c^{5/7}$ come from small values $c$.
Experimentally, see again Figure \ref{f:dirichlet} but also Figures \ref{f:dirichlet_coef_2} and \ref{f:dirichlet_coef_3}, it is evident that the distribution of the values $X(n,c)$ along the line $u^n$ is broadly controlled by the congruence $c \pmod{12}$. 
The exact distributions appear to depend on $n$: the cases for $X(1,c)$ are illustrated in Figures \ref{f:dirichlet_coef_2} and \ref{f:dirichlet_coef_3}.
We note that the exponent $2/7$ on $c$ is selected to make the graphs appear approximately normal, we have no evidence this is the correct exponent, nor that these distributions should be normal.
\begin{figure}

  \includegraphics[scale=0.14]{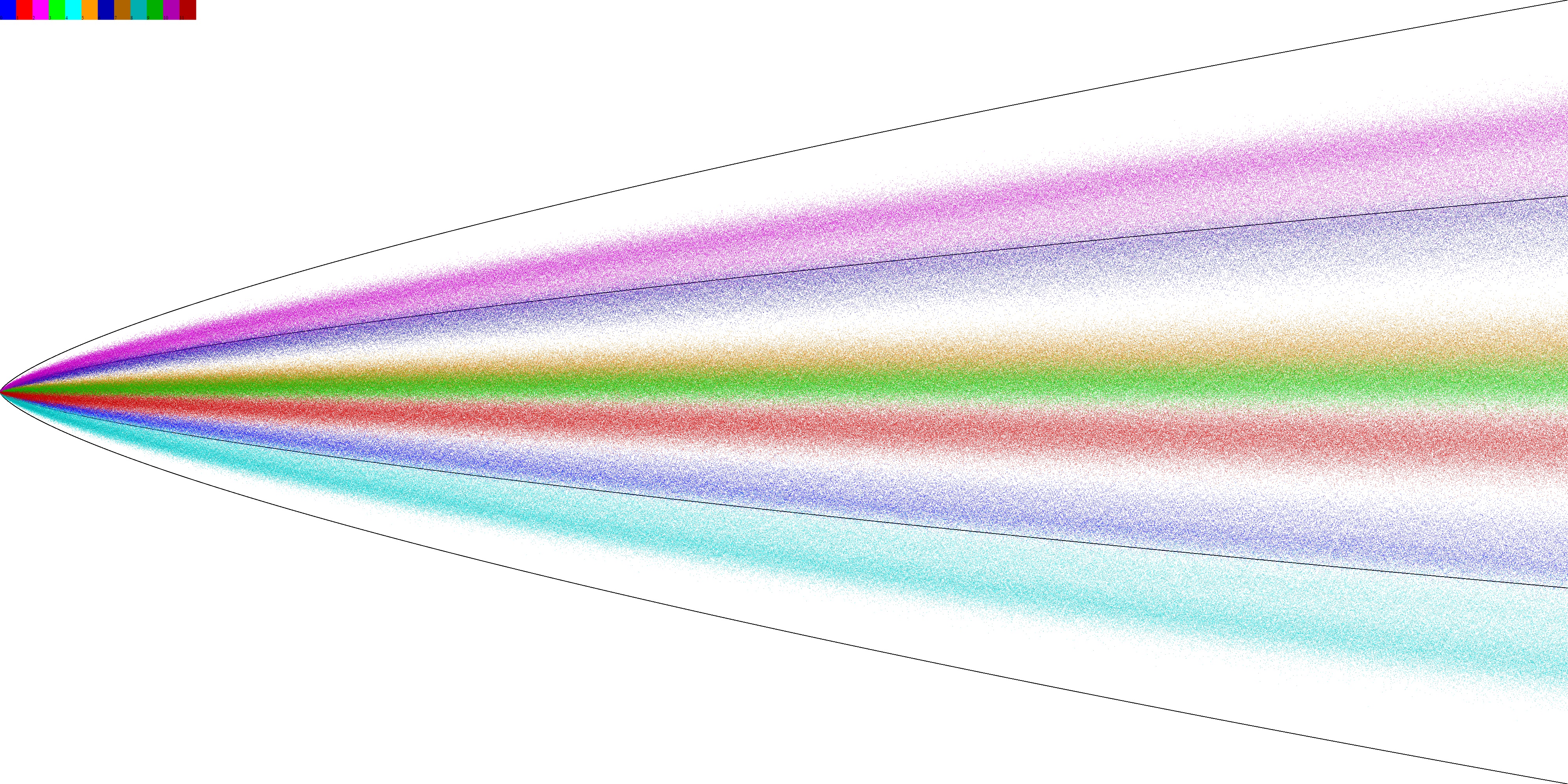}
  \caption{$X(1,c)/e^{(\pi i/4)}$ for $c\leq 2,000,000$. The outer black curve is $\pm c^{5/7}$, the inner is $\pm (1/2)c^{5/7}$.
                  From top to bottom the colored bands are $\pm2 \pmod{12}$, $6 \pmod{12}$, $\pm 5 \pmod{12}$, $\pm 3\pmod{12}$, $\pm1\pmod{12}$, $0\pmod{12}$, and $\pm4\pmod{12}$.}  \label{f:dirichlet}
                
\end{figure}
\begin{figure}
  \includegraphics[scale=0.475]{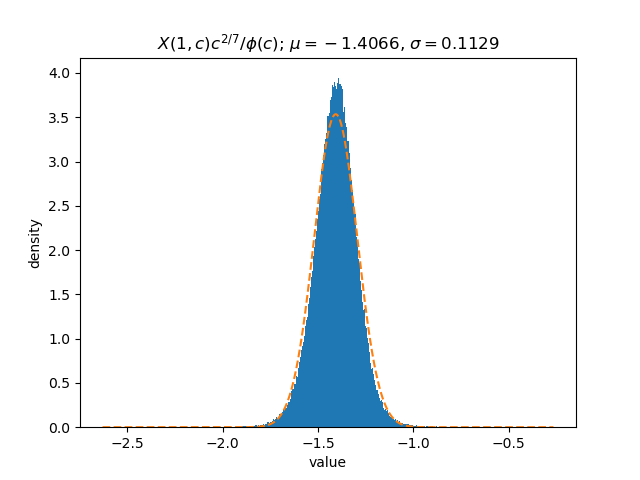}
 \includegraphics[scale=0.475]{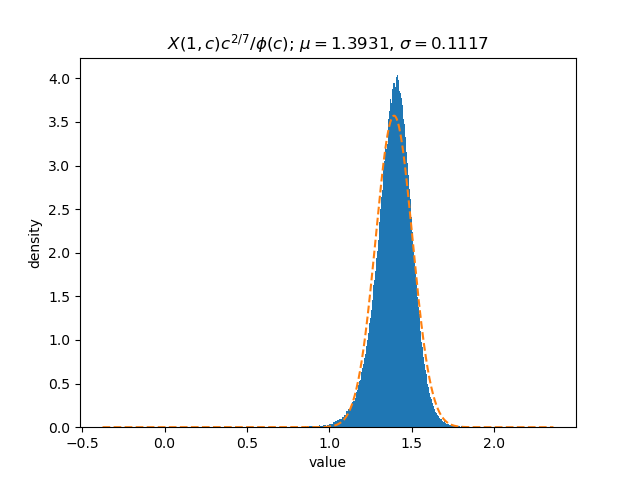}
   \caption{Normalized histograms for $X(1,c)c^{2/7}/(\phi(c) e^{\pi i/4})$ for $2||c$, $4|c$ with $c$ up to 2,000,000. Curve is a normal distribution with given paramters.}\label{f:dirichlet_coef_2}
 
 \includegraphics[scale=0.475]{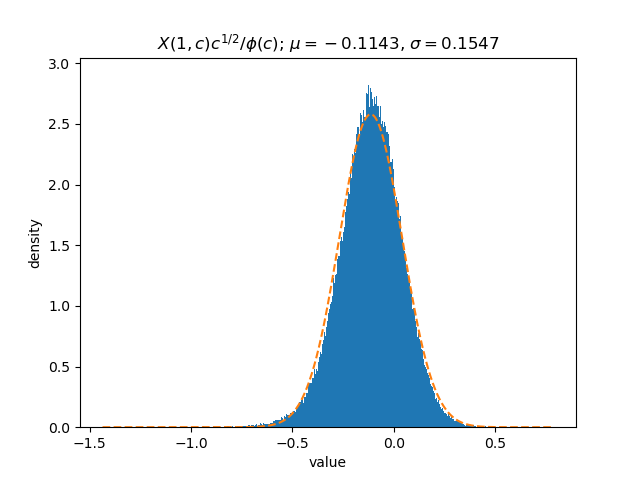}
 \includegraphics[scale=0.475]{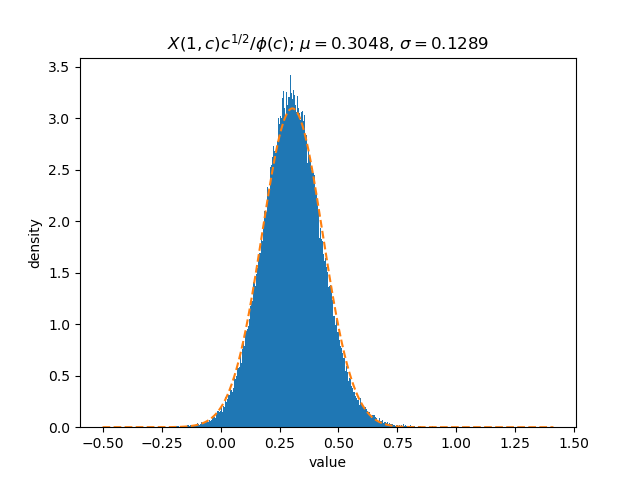}
 \includegraphics[scale=0.475]{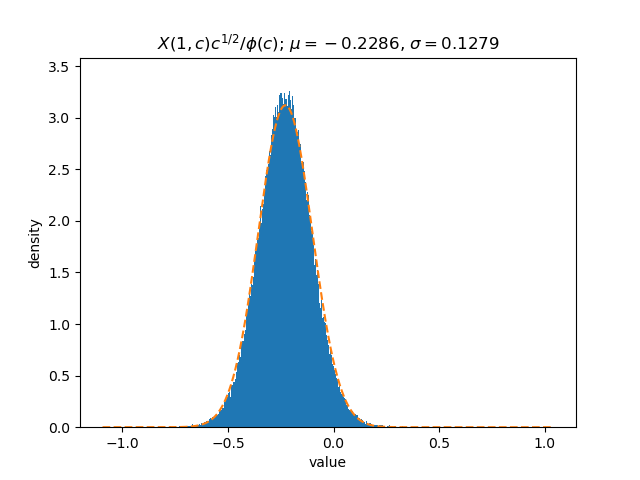}
   \caption{Normalized histograms for $X(1,c)c^{1/2}/(\phi(c) e^{\pi i/4})$ for (clockwise from top left) $c=\pm 3 \pmod{12}$, $c=\pm1\pmod{12}$, and $c=\pm 5\pmod{12}$  with $c$ up to 2,000,000.  Curve is a normal distribution with given paramters.} \label{f:dirichlet_coef_3}

\end{figure}

As with $g_2$ it is an exercise to conclude that $g_4 = (E_4/f_3) \cdot a_1 \cdot (z-Cu)$ for some $C\in \mathbb{R}$. The algebraicity of $C$ is equivalent to that of both the divisor of $g_4$ as well as that of its Fourier coefficients.
We have included for the curious reader our computations of the first few Fourier coefficients of $g_4$ in Table \ref{t:fourierg4} on page \pageref{t:fourierg4}. The computations for $a_1$ used terms with $c$ up to $2,000,000$, while for $a_2,\ldots,a_{10}$, we used $c$ up to $300,000$. The computations for $a_1$ took over a month of CPU time using resources from Compute Canada. Note that the final digits may not be accurate, as we have provided one digit beyond the apparent precision, and the actual precision may be less still (see Remark \ref{rem:convergence}). We have not been able to identify any apparent algebraic dependency for the higher coefficients, but this may be a simple reflection of a lack of sufficient precision to detect dependency relations.
\begin{table}
    \begin{tabular}{|c|rl|}
    \hline
  $n$&$a_n/u^n$,&\quad $u=(-7)^{(1/4)}/7^2$ \\
    \hline
    $0$&$1$ &\\
    $1$&$40$&\hskip-8pt$.7303189636318364926\cdots$\\
    $2$&$303$&\hskip-8pt$.7319312003984\cdots$\\
    $3$&$-1113445$&\hskip-8pt$.924994532325\cdots$\\
    $4$&$-101378021$&\hskip-8pt$.6026120116\cdots$\\
    $5$&$-4677356098$&\hskip-8pt$.49752275\cdots$\\
    $6$&$110516113983$&\hskip-8pt$.5601513\cdots$\\
    $7$&$10622672944963$&\hskip-8pt$.34244\cdots$\\
    $8$&$703827515349172$&\hskip-8pt$.972\cdots$\\
    $9$&$20587451911329502$&\hskip-8pt$.7\cdots$\\
    $10$&$54985771355001805$&\hskip-8pt$.6\cdots$\\
    \hline
    \end{tabular}
    \caption{Approximate values of normalized Fourier coefficients for $g_{4}$ for the group $G_1$}
   \label{t:fourierg4}
 \end{table}

  \appendix
  \section{Code}
  \label{a:code}

The following PARI/GP code computes $\chi(c,d)$
  
\begin{verbatim}
Chi(c,d) = {
   my(s=Vecsmall([2,1,4,3,6,5,7])); /* Permutation Phi(S) */
   my(t=Vecsmall([2,4,6,5,1,7,3])); /* Permutation Phi(T) */
   my(res=Vecsmall([1,2,3,4,5,6,7]));
   my(q);
   while( c != 0,  /* Compute Phi(M) for M with bottom row c,d */
       if( abs(d) >= abs(c) ,
            q=d\c;
            d=d-q*c;
            res = (t^(q%12))*res;
         ,
            res = s*res;
            q=c;
            c=d;
            d=-q;
       );
   );
   if( abs(d) != 1, return(0)); /* c,d  not relatively prime */
   if( res[1]==1||res[1]==t[1]||res[1]==(t*t)[1]||res[1]==(t*t*t)[i],
           return(1);  
   );
   return(0);
}
\end{verbatim}
  
\bibliographystyle{plain}
\bibliography{refs}

\begin{thebibliography}{10}

\bibitem{Li2}
A.~O.~L. Atkin, Wen-Ching~Winnie Li, Tong Liu, and Ling Long.
\newblock Galois representations with quaternion multiplication associated to
  noncongruence modular forms.
\newblock {\em Trans. Amer. Math. Soc.}, 365(12):6217--6242, 2013.

\bibitem{Long2}
A.~O.~L. Atkin, Wen-Ching~Winnie Li, and Ling Long.
\newblock On {A}tkin and {S}winnerton-{D}yer congruence relations. {II}.
\newblock {\em Math. Ann.}, 340(2):335--358, 2008.

\bibitem{ASD}
A.~O.~L. Atkin and H.~P.~F. Swinnerton-Dyer.
\newblock Modular forms on noncongruence subgroups.
\newblock In {\em Combinatorics ({P}roc. {S}ympos. {P}ure {M}ath., {V}ol.
  {XIX}, {U}niv. {C}alifornia, {L}os {A}ngeles, {C}alif., 1968)}, pages 1--25,
  1971.

\bibitem{Belyi}
G.~V. Bely\u{\i}.
\newblock Galois extensions of a maximal cyclotomic field.
\newblock {\em Izv. Akad. Nauk SSSR Ser. Mat.}, 43(2):267--276, 479, 1979.

\bibitem{BKP}
Vincenz Busch, Ulf Kuehn, and Anna Posingies.
\newblock On scattering constants for a non-congruence subgroup.
\newblock {\em arXiv preprint arXiv:0906.1466}, 2009.

\bibitem{CandeloriFranc}
Luca Candelori and Cameron Franc.
\newblock Vector-valued modular forms and the modular orbifold of elliptic
  curves.
\newblock {\em Int. J. Number Theory}, 13(1):39--63, 2017.

\bibitem{Chen}
William~Yun Chen.
\newblock Moduli interpretations for noncongruence modular curves.
\newblock {\em Math. Ann.}, 371(1-2):41--126, 2018.

\bibitem{FrancMason1}
Cameron Franc and Geoffrey Mason.
\newblock Fourier coefficients of vector-valued modular forms of dimension 2.
\newblock {\em Canad. Math. Bull.}, 57(3):485--494, 2014.

\bibitem{FrancMason2}
Cameron Franc and Geoffrey Mason.
\newblock Three-dimensional imprimitive representations of the modular group
  and their associated modular forms.
\newblock {\em J. Number Theory}, 160:186--214, 2016.

\bibitem{Gottesman}
Richard Gottesman.
\newblock The arithmetic of vector-valued modular forms on {$\Gamma_0(2)$}.
\newblock {\em Int. J. Number Theory}, 16(2):241--289, 2020.

\bibitem{HLV}
Jerome~William Hoffman, Ling Long, and Helena Verrill.
\newblock On {$\ell$}-adic representations for a space of noncongruence
  cuspforms.
\newblock {\em Proc. Amer. Math. Soc.}, 140(5):1569--1584, 2012.

\bibitem{Scholl6}
Matija Kazalicki and Anthony~J. Scholl.
\newblock Modular forms, de {R}ham cohomology and congruences.
\newblock {\em Trans. Amer. Math. Soc.}, 368(10):7097--7117, 2016.

\bibitem{Kubota}
Tomio Kubota.
\newblock {\em Elementary theory of {E}isenstein series}.
\newblock Kodansha Ltd., Tokyo; Halsted Press [John Wiley \& Sons], New
  York-London-Sydney, 1973.

\bibitem{Kuhn}
Ulf K\"{u}hn.
\newblock N\'{e}ron-{T}ate heights on algebraic curves and subgroups of the
  modular group.
\newblock {\em Manuscripta Math.}, 116(4):401--419, 2005.

\bibitem{Long4}
Chris Kurth and Ling Long.
\newblock On modular forms for some noncongruence subgroups of {${\rm
  SL}_2(\Bbb Z)$}. {II}.
\newblock {\em Bull. Lond. Math. Soc.}, 41(4):589--598, 2009.

\bibitem{Long5}
Chris~A. Kurth and Ling Long.
\newblock On modular forms for some noncongruence subgroups of {${\rm
  SL}_2(\Bbb Z)$}.
\newblock {\em J. Number Theory}, 128(7):1989--2009, 2008.

\bibitem{Li3}
Wen-Ching~Winnie Li and Ling Long.
\newblock Fourier coefficients of noncongruence cuspforms.
\newblock {\em Bull. Lond. Math. Soc.}, 44(3):591--598, 2012.

\bibitem{Li1}
Wen-Ching~Winnie Li and Ling Long.
\newblock Atkin and {S}winnerton-{D}yer congruences and noncongruence modular
  forms.
\newblock In {\em Algebraic number theory and related topics 2012}, RIMS
  K\^{o}ky\^{u}roku Bessatsu, B51, pages 269--299. Res. Inst. Math. Sci.
  (RIMS), Kyoto, 2014.

\bibitem{Long3}
Wen-Ching~Winnie Li, Ling Long, and Zifeng Yang.
\newblock On {A}tkin-{S}winnerton-{D}yer congruence relations.
\newblock {\em J. Number Theory}, 113(1):117--148, 2005.

\bibitem{Long1}
Ling Long.
\newblock On {A}tkin and {S}winnerton-{D}yer congruence relations. {III}.
\newblock {\em J. Number Theory}, 128(8):2413--2429, 2008.

\bibitem{MurtyRamakrishnan}
V.~Kumar Murty and Dinakar Ramakrishnan.
\newblock The {M}anin-{D}rinfeld theorem and {R}amanujan sums.
\newblock {\em Proc. Indian Acad. Sci. Math. Sci.}, 97(1-3):251--262 (1988),
  1987.

\bibitem{Voight1}
Michael Musty, Sam Schiavone, Jeroen Sijsling, and John Voight.
\newblock A database of {B}elyi maps.
\newblock In {\em Proceedings of the {T}hirteenth {A}lgorithmic {N}umber
  {T}heory {S}ymposium}, volume~2 of {\em Open Book Ser.}, pages 375--392.
  Math. Sci. Publ., Berkeley, CA, 2019.

\bibitem{PhilipsSarnak}
R.~Phillips and P.~Sarnak.
\newblock The spectrum of {F}ermat curves.
\newblock {\em Geom. Funct. Anal.}, 1(1):80--146, 1991.

\bibitem{PhilipsSarnak2}
R.~Phillips and P.~Sarnak.
\newblock Cusp forms for character varieties.
\newblock {\em Geom. Funct. Anal.}, 4(1):93--118, 1994.

\bibitem{PhilipsSarnak3}
R.~S. Phillips and P.~Sarnak.
\newblock On cusp forms for co-finite subgroups of {${\rm PSL}(2,{\bf R})$}.
\newblock {\em Invent. Math.}, 80(2):339--364, 1985.

\bibitem{Posingies}
Anna Posingies.
\newblock Kronecker limit formulas and scattering constants for fermat curves.
\newblock {\em arXiv preprint arXiv:1111.1132}, 2011.

\bibitem{Schoeneberg}
Bruno Schoeneberg.
\newblock {\em Elliptic modular functions: an introduction}, volume 203.
\newblock Springer Science \& Business Media, 2012.

\bibitem{Scholl3}
A.~J. Scholl.
\newblock Modular forms and de {R}ham cohomology; {A}tkin-{S}winnerton-{D}yer
  congruences.
\newblock {\em Invent. Math.}, 79(1):49--77, 1985.

\bibitem{Scholl1}
A.~J. Scholl.
\newblock Fourier coefficients of {E}isenstein series on noncongruence
  subgroups.
\newblock {\em Math. Proc. Cambridge Philos. Soc.}, 99(1):11--17, 1986.

\bibitem{Scholl2}
A.~J. Scholl.
\newblock Modular forms on noncongruence subgroups.
\newblock In {\em S\'{e}minaire de {T}h\'{e}orie des {N}ombres, {P}aris
  1985--86}, volume~71 of {\em Progr. Math.}, pages 199--206. Birkh\"{a}user
  Boston, Boston, MA, 1987.

\bibitem{Scholl4}
A.~J. Scholl.
\newblock The {$l$}-adic representations attached to a certain noncongruence
  subgroup.
\newblock {\em J. Reine Angew. Math.}, 392:1--15, 1988.

\bibitem{Scholl5}
A.~J. Scholl.
\newblock On some {$l$}-adic representations of {${\rm Gal}(\overline{\Bbb
  Q}/{\Bbb Q})$} attached to noncongruence subgroups.
\newblock {\em Bull. London Math. Soc.}, 38(4):561--567, 2006.

\bibitem{Voight2}
J.~Sijsling and J.~Voight.
\newblock On computing {B}elyi maps.
\newblock In {\em Num\'{e}ro consacr\'{e} au trimestre ``{M}\'{e}thodes
  arithm\'{e}tiques et applications'', automne 2013}, volume 2014/1 of {\em
  Publ. Math. Besan\c{c}on Alg\`ebre Th\'{e}orie Nr.}, pages 73--131. Presses
  Univ. Franche-Comt\'{e}, Besan\c{c}on, 2014.

\bibitem{Stanley}
Richard~P. Stanley.
\newblock {\em Enumerative combinatorics. {V}ol. 2}, volume~62 of {\em
  Cambridge Studies in Advanced Mathematics}.
\newblock Cambridge University Press, Cambridge, 1999.
\newblock With a foreword by Gian-Carlo Rota and appendix 1 by Sergey Fomin.

\bibitem{Wohlfahrt}
Klaus Wohlfahrt.
\newblock An extension of {F}. {K}lein's level concept.
\newblock {\em Illinois J. Math.}, 8:529--535, 1964.

\end{thebibliography}
\end{document}